\newtheorem{theorem}{Theorem}[section] 		
\newtheorem{example}[theorem]{Example}
\newtheorem{definition}[theorem]{Definition}
\newtheorem{corollary}[theorem]{Corollary}
\newenvironment{proof}{\paragraph{Proof:}}{\hfill$\square$ \\}
\newtheorem{remark}[theorem]{Remark}
\newtheorem{thm}{Theorem}
\DeclareMathOperator{\Ker}{Ker}
\DeclareMathOperator{\Img}{Im}
\DeclareMathOperator{\id}{id}
\DeclareMathOperator{\Span}{Span}
\newcommand{\Z}{{\mathbb{Z}}}
\newcommand{\Q}{{\mathbb{Q}}}
\newcommand{\K}{{\mathbb{K}}}
\begin{document}

\title{$A_\infty$ structures and Massey products}
\author{Urtzi Buijs, José M. Moreno-Fernández, Aniceto Murillo\footnote{The authors have been partially supported by the MICINN grant MTM2016-78647-P, and by the Junta de Andalucía grant FQM-213. \vskip 1pt 2010 Mathematics subject
classification: 55S30, 16E45, 18G55, 55P62, 55P60.\vskip
 1pt
 Key words and phrases: Higher Massey products. $A_\infty$-algebra. Rational homotopy theory.}}
\date{}
\maketitle

{\begin{center}\small \em In memory of W. S. Massey\end{center}}

\abstract{We show how and when it is possible to detect and recover higher Massey products on the cohomology $H$ of a differential graded algebra $A$ with higher multiplications on quasi-isomorphic $A_\infty$ structures on $H$.}\\

\section{Introduction}

Higher order Massey products were introduced in \cite{Massey} and generalized in \cite{MayMatric}. These are of fundamental importance not only in the study of differential graded algebras (DGA's, henceforward) per se, but also  in those geometrical contexts where DGA's play a role. Classical instances of this fact are the detection of linking numbers of knots \cite{massey1998higher} or the obstructions to formality of Kähler manifolds \cite{deligne1975real}. Recently, Massey products have proved to be useful in a wide range of applications which go from symplectic geometry \cite{babenko2000massey} to algebraic geometry \cite{taylor2010controlling} passing through homotopy \cite{Longoni2005375}, group  \cite{ruiz2013cohomological}, and number theory \cite{minavc2017triple}.

On the other hand, and since their introduction   in relation with the homotopy theory of $H$-spaces \cite{stasheff1,stasheff2}, $A_\infty$ algebras have also been successfully used in geometry, algebra and mathematical physics, see for instance \cite{Merkulov}, \cite{lu2009infinity} and \cite{Fukaya} respectively. A particulary pedagogical and complete introduction to these structures can be found in \cite{Keller}. An essential result  states that given any DGA $A$, there is a structure of minimal $A_\infty$ algebra on its cohomology $H$, unique up to $A_\infty$ isomorphism, for which $A$ and $H$ are quasi-isomorphic $A_\infty$ algebras {\cite{Kadeishvili}}. However, this unique isomorphism $A_\infty$ class  can be produced by infinitely  many distinct  sets of higher multiplications.

In this paper, we show how and when it is possible to detect and recover higher Massey products in the cohomology $H$ of a DGA $A$ with higher multiplications of such an $A_\infty$ structure on $H$. In what follows $\langle x_1,\dots,x_n\rangle$ and $m_n(x_1,\dots,x_n)$ denote, respectively,  the Massey product set of the cohomology classes $x_1,\dots,x_n$  and the $n$th multiplication of these classes induced by the chosen $A_\infty$ structure on $H$ which is always assumed to be quasi-isomorphic to $A$.

Given an element of a higher Massey product set there is always a particular $A_\infty$ structure on $H$ which recovers it, although in general, it does so up  to lower multiplications:

\begin{thm}\label{generalintro}{\em (Theorem {\ref{General}})}
 Let $x\in \left\langle x_1,{\dots},x_n\right\rangle$, with $n\ge 3$.

\begin{itemize}

\item[(i)] There is an $A_\infty$ structure on $H$ which recovers $x$, i.e., $\pm m_n(x_1,\dots,x_n)=x$.

\item[(ii)]  In general, for any $A_\infty$ structure on $H$, $$\pm m_n\left(x_1,{\dots},x_n\right)=x + \Gamma, \qquad \Gamma\in \sum_{j=1}^{n-1}\Img\left(m_j\right).$$
    \end{itemize}
 \end{thm}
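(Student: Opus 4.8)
The plan is to realize the minimal $A_\infty$ structures on $H$ explicitly through the homotopy transfer theorem (the Kadeishvili--Merkulov construction) and to match the resulting tree formulas for the higher multiplications against the recursion defining a Massey defining system. Concretely, I would fix a contraction $(i,p,h)$ of $A$ onto $H$, with $i$ choosing cocycle representatives, $p$ the projection onto cohomology, and $h$ a homotopy with $\id_A-ip=dh+hd$ together with the usual side conditions $hi=0$, $ph=0$, $h^2=0$. The transferred structure then has $m_s=p\,\lambda_s$, where $\lambda_s\colon H^{\otimes s}\to A$ is built by grafting the product of $A$ along planar binary trees whose internal edges carry $h$ and whose leaves carry $i$. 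The whole argument rests on one computational observation: writing $a_k=i(x_k)$, the element $a_ia_{i+1}$ is a cocycle, so $dh(a_ia_{i+1})=a_ia_{i+1}-i\,m_2(x_i,x_{i+1})$; when $m_2(x_i,x_{i+1})=0$ this becomes $dh(a_ia_{i+1})=a_ia_{i+1}$, and $h$ produces exactly the primitives appearing in a Massey defining system.

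For part (i) I would start from the hypothesis $x\in\langle x_1,\dots,x_n\rangle$, which furnishes a defining system $\{a_{jk}\}$ with $a_{jj}$ representing $x_j$, with $d a_{jk}=\sum_{l=j}^{k-1}\bar a_{jl}\,a_{l+1,k}$, and with the cocycle $c=\sum_{l=1}^{n-1}\bar a_{1l}\,a_{l+1,n}$ representing $x$. The idea is to choose the contraction so that it \emph{is} this defining system: set $i(x_j)=a_{jj}$ and prescribe $h$ so that, on the relevant coboundaries, $h\lambda_s(x_j,\dots,x_{j+s-1})=a_{j,j+s-1}$ for every consecutive sub-tuple. An induction on $s$ then identifies $\lambda_s(x_j,\dots,x_{j+s-1})$ with $d a_{j,j+s-1}$, since the transfer recursion (sum over the splitting point of the tree) is precisely the defining-system recursion. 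At the top this gives $\lambda_n(x_1,\dots,x_n)=c$, whence $\pm m_n(x_1,\dots,x_n)=p(c)=[c]=x$. Moreover every proper consecutive sub-tuple yields $m_s(x_j,\dots,x_{j+s-1})=p(da_{j,j+s-1})=0$, so these lower multiplications vanish --- a fact I will reuse crucially in part (ii).

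For part (ii) I would not recompute $m_n$ for an arbitrary structure but transport the conclusion of (i) along an $A_\infty$ isomorphism. By the uniqueness part of Kadeishvili's theorem any two minimal $A_\infty$ structures on $H$ quasi-isomorphic to $A$ are connected by an $A_\infty$ isomorphism $F$ whose linear part is the identity, $F_1=\id_H$ (for a minimal structure the induced map on cohomology is $F_1$ itself). Writing $m$ for the structure built in (i) and $\tilde m$ for the arbitrary one, I would evaluate the arity-$n$ morphism relation
\[
\sum_{i_1+\cdots+i_r=n}\pm\,\tilde m_r(F_{i_1}\otimes\cdots\otimes F_{i_r})
=\sum_{r+s+t=n}\pm\,F_{r+1+t}(\id^{\otimes r}\otimes m_s\otimes\id^{\otimes t})
\]
on $x_1\otimes\cdots\otimes x_n$. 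On the right, every summand with $s<n$ contains a factor $m_s(x_{r+1},\dots,x_{r+s})$ on a \emph{consecutive} sub-tuple, which vanishes by (i); hence only the $s=n$ term $F_1(m_n(x_1,\dots,x_n))=\pm x$ survives. On the left the term with $r=n$ is $\tilde m_n(x_1,\dots,x_n)$, while every term with $r<n$ has outer operation $\tilde m_r$ and so lies in $\Img(\tilde m_r)$. Solving for $\tilde m_n(x_1,\dots,x_n)$ then yields $\pm\tilde m_n(x_1,\dots,x_n)=x+\Gamma$ with $\Gamma\in\sum_{j=1}^{n-1}\Img(\tilde m_j)$, as required.

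The main obstacle is the bookkeeping in part (i): one must check that the prescription $h\lambda_s(\cdots)=a_{j,j+s-1}$ is consistent and extends to a genuine contraction satisfying the side conditions (a point where working over a field and choosing a complement of the relevant coboundaries is essential), and one must track the signs carefully enough to see that the transfer recursion and the defining-system recursion agree up to the overall sign recorded in the statement. Once the adapted contraction of (i) is in place, part (ii) is essentially formal, the only subtlety being the observation --- which makes the whole argument work --- that the vanishing of the lower multiplications on consecutive sub-tuples kills exactly the right-hand correction terms in the morphism relation.
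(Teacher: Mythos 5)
Your proof of part (i) contains a genuine gap, and it is exactly the ``consistency check'' you defer to the end, not the sign bookkeeping. What you are trying to build is a contraction \emph{adapted} to $x$ in the sense of the paper's Definition \ref{RetractoAdaptado2}, and such a contraction need not exist: the paper's Example \ref{OtroEjem} is a counterexample to your whole strategy. In that example $A^2=0$, so the representatives of the $x_i$ are forced to be $a_{01},a_{12},a_{23}$, and the product $i(x_1)i(x_2)=a_{01}a_{12}$ is literally zero in $A$; hence $\lambda_2(x_1,x_2)=0$ and $h\lambda_2(x_1,x_2)=0$ for \emph{every} choice of homotopy $h$. But any defining system computing a nonzero $x\in\langle x_1,x_2,x_3\rangle$ there must have a nonzero primitive $b_{02}$ or $b_{13}$ (and these are necessarily cocycles, so they can never lie in $\im h\subseteq KdA$, a complement of $\Ker d$). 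Consequently your prescription $h\lambda_s(\cdots)=a_{j,j+s-1}$ is inconsistent, and more globally: in that example $m_3(x_1,x_2,x_3)=0$ for every contraction-induced structure, while $\langle x_1,x_2,x_3\rangle$ contains infinitely many nonzero classes, so \emph{no} argument confined to homotopy-transfer structures can prove (i). This is precisely why the paper proves (i) with Kadeishvili's inductive algorithm (Theorem \ref{kadei}) instead: the components $j_k$ of the quasi-isomorphism $H\to A$ are chosen by hand subject only to $dj_p=j_1m_p-U_p$, and in particular need not be of the form $K\lambda_p$; this extra freedom is what allows one to set $j_k(x_{i+1},\dots,x_{i+k})=\pm a_{i,i+k}$ for an \emph{arbitrary} defining system. (Your computation matching the transfer recursion with the defining-system recursion is not wasted; it is essentially the paper's Theorem \ref{AdaptadoSaleMassey}, which holds exactly \emph{when} an adapted contraction happens to exist.)

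Your part (ii), by contrast, is a genuinely different route from the paper's and is essentially sound, though as written it is conditional on your broken part (i). The paper proves (ii) by filtering the bar construction $(T(sA),\delta)$ and comparing Eilenberg--Moore spectral sequences, then unravelling $\delta^{n-1}\,\overline{sx_1\otimes\cdots\otimes sx_n}=\overline{sx}$ by word length. You instead transport the structure from (i) along an $A_\infty$ isomorphism $F$ with $F_1=\id_H$ and evaluate the arity-$n$ morphism identity on $x_1\otimes\cdots\otimes x_n$, using that the structure from (i) satisfies $m_s(x_{r+1},\dots,x_{r+s})=0$ on all proper consecutive subtuples so that only $F_1(m_n(x_1,\dots,x_n))$ survives on one side. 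That argument works: the paper's own proof of (i) does supply exactly those vanishings (they are its induction hypothesis), and $F_1=\id$ can be arranged since both structures carry quasi-isomorphisms to $A$ inducing the identity on $H$. So once you replace your (i) by the Kadeishvili-algorithm construction, your (ii) gives a more elementary alternative to the spectral-sequence comparison; until then, the proposal as a whole does not stand.
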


Often, $A_\infty$ structures on $H$ are obtained by exhibiting $H$ as a contraction of $A$ via the homotopy transfer theorem. Even in this case, a particular Massey product is recovered by the $A_\infty$ structure only when the contraction is {\em adapted} (see Section \S2 for a precise definition) to the given product.

\begin{thm} \label{adaptadointro}{\em (Theorem {\ref{AdaptadoSaleMassey}})} Let $x\in \left\langle x_1,{\dots},x_n \right\rangle$. Then, for any contraction  adapted to $x$, $$\varepsilon\, m_n(x_1,{\dots},x_n)= x,$$ where $\varepsilon = (-1)^{1 + |x_{n-1}| + |x_{n-3}|+\cdots}$ .
\end{thm}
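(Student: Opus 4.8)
The plan is to unwind the homotopy transfer theorem and to match the trees computing $m_n$ with the defining system computing $x$. Recall that a contraction of $A$ onto $H$ consists of maps $i\colon H\to A$, $p\colon A\to H$ and a homotopy $h\colon A\to A$ with $pi=\id$, $ip-\id=dh+hd$, together with the usual side conditions, and that the induced multiplications are $m_n=p\,U_n$, where the operators $U_n\colon H^{\otimes n}\to A$ are given by Merkulov's recursion $U_1=i$ and, writing $\mu$ for the product of $A$,
$$U_n(x_1,\dots,x_n)=\sum_{k=1}^{n-1}\pm\,\mu\bigl(\widetilde U(x_1,\dots,x_k),\,\widetilde U(x_{k+1},\dots,x_n)\bigr),\qquad \widetilde U=\begin{cases} i,& n=1,\\ h\,U,& n\ge 2.\end{cases}$$
First I would recall from Section \S2 that, for a contraction adapted to $x$, the elements $a_{ij}:=\widetilde U(x_i,\dots,x_j)$ should form, up to sign, a defining system for $x$ with $a_{ii}=i x_i$.

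Second, I would prove by induction on $j-i$ that the $a_{ij}$ indeed satisfy the defining-system relations $d\,a_{ij}=\sum_{k}\bar a_{ik}\,a_{k+1,j}$. The key identity is $d\,\widetilde U=dh\,U=(ip-\id-hd)\,U$; because the contraction is adapted and the Massey product is defined, the intermediate multiplications vanish on the relevant substrings, so the term $ip\,U$ contributes nothing and $d\,\widetilde U(x_i,\dots,x_j)=-U(x_i,\dots,x_j)-h\,dU(x_i,\dots,x_j)$. Applying the Leibniz rule to $dU$ and feeding in the inductive hypothesis should reorganize this expression precisely into $\sum_k \bar a_{ik}a_{k+1,j}$.

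Third, with the defining system identified, I observe that the top operator $U_n(x_1,\dots,x_n)=\sum_{k=1}^{n-1}\pm\,\mu(a_{1k},a_{k+1,n})$ agrees, up to one global sign, with the Massey cocycle $\sum_{k}\bar a_{1k}\,a_{k+1,n}$ whose class is $x$. Since $p$ is a chain map to $(H,0)$ with $p_\ast=i_\ast^{-1}$, evaluating $p$ on a cocycle reads off its cohomology class, so $m_n=pU_n=\pm x$ under the identification $H^\ast(A)\cong H$.

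Finally, the remaining and hardest task is the sign. I would track the Koszul signs from three sources: those in Merkulov's recursion, the sign $(-1)^{|a|}$ defining $\bar a$, and the sign in $ip-\id=dh+hd$. Collecting them along the recursion and comparing the two cocycles term by term should collapse to the stated normalization $\varepsilon=(-1)^{1+|x_{n-1}|+|x_{n-3}|+\cdots}$; I expect this bookkeeping, rather than the structural identification, to be the principal obstacle, so I would first verify the base case $n=3$ (where $\varepsilon=(-1)^{1+|x_2|}$) explicitly and then propagate the signs through the inductive step.
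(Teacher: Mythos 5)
Your machinery is the right one (your Merkulov recursion $U_n$, $\widetilde U$ is exactly the paper's $\lambda_k$, $K\lambda_k$ from formula (\ref{FormulaLambda})), but the core of your argument contains the precise gap that the paper spends Section 3 diagnosing in \cite[Thm.~3.1]{lu2009infinity}. You want to show that the transferred elements $a_{ij}:=\widetilde U(x_i,\dots,x_j)$ satisfy the defining-system relations, and for that you assert that ``because the contraction is adapted and the Massey product is defined, the intermediate multiplications vanish on the relevant substrings.'' This is not something you can ``recall from Section \S 2'': Definition \ref{RetractoAdaptado2} only posits the \emph{existence} of a defining system $\{a_{ij}\}$ with $i(x_j)=a_{j-1,j}$ and $\{a_{ij}\}_{j-i\geq 2}\subseteq KdA$; it says nothing about the transferred elements. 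Nor does definedness of $\langle x_1,\dots,x_n\rangle$ help: it only guarantees that each intermediate set $\langle x_i,\dots,x_j\rangle$ \emph{contains} zero, not that the particular class $m_{j-i+1}(x_i,\dots,x_j)=qU(x_i,\dots,x_j)$ produced by your recursion is zero. Example \ref{ContraejemploMassey} shows this failure concretely: there $\langle x_1,x_2,x_3,x_4\rangle\neq\emptyset$, yet for the contraction given one computes $K\lambda_2(x_1,x_2)=\pm(a_{02}+z_1)$ and hence $m_3(x_1,x_2,x_3)=\pm[z_1a_{23}]\neq 0$, so the transferred elements do not assemble into a defining system. The vanishing you need is a consequence of adaptedness, but proving it already requires the identification described below, which your proposal never makes.

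The second, more serious problem is that even if your induction closed, it would prove the wrong statement: knowing that the $\widetilde U(x_i,\dots,x_j)$ form \emph{some} defining system only yields $\pm m_n(x_1,\dots,x_n)\in\langle x_1,\dots,x_n\rangle$, i.e.\ the detection statement of Theorem \ref{Lemita}, not the recovery $\pm m_n(x_1,\dots,x_n)=x$ of the \emph{given} class. Since Massey sets typically contain many distinct elements (Example \ref{OtroEjem}), membership does not imply equality with $x$. What the paper does instead, and what your proof is missing, is to use both clauses of adaptedness to identify the transferred elements \emph{exactly} with the defining system $\{a_{ij}\}$ that adaptedness hands you: by induction, $\lambda_s(x_{i+1},\dots,x_{i+s})=\pm\sum_{k}\bar a_{ik}a_{k,i+s}=\pm\, d a_{i,i+s}$, whence $K\lambda_s(x_{i+1},\dots,x_{i+s})=\pm\, Kd\,a_{i,i+s}=\pm\, a_{i,i+s}$ on the nose, because $a_{i,i+s}\in KdA=B$ and $Kd=\id$ on $B$ (Remark \ref{ExisteRetracto}). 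This one identity simultaneously gives the vanishing of the intermediate multiplications (they equal $\pm\,q(da_{i,i+s})=0$ since $qd=0$) and forces the top cocycle $\lambda_n(x_1,\dots,x_n)$ to be, up to the stated sign, the Massey cocycle of \emph{that} defining system, whose class is $x$. Your worry that the signs are the principal obstacle is misplaced; the sign bookkeeping goes through as you expect, and it is the identification with the given defining system that carries the theorem.
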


Examples \ref{ContraejemploMassey} and \ref{OtroEjem} show the accuracy of the above results and  corroborate the fact that not every $A_\infty$ structure on $H$ quasi-isomorphic to $A$ arises from a contraction. These examples are connected to topology via classical rational homotopy theory.

A more general problem is to detect when a higher multiplication of a given $A_\infty$ structure on $H$ produces a Massey product. This is not always the case and some assumptions are needed even when the $A_\infty$ structure arises from a contraction, see Theorem \ref{Lemita} or the following:

\begin{thm}  {\em (Theorem {\ref{Teoremilla}})} Let  $\left \langle x_1,{\dots},x_n\right\rangle\neq \emptyset$ with $n\ge 3$. If for some (and hence for any) homotopy retract of $A$ onto $H$, the induced higher multiplications $m_k=0$ for $k\le n-2$,  then $$\varepsilon\, m_n\left(x_1,{\dots},x_n\right)\in \left \langle x_1,{\dots},x_n\right\rangle,$$
with $\varepsilon$ as in Theorem \ref{generalintro}.
\end{thm}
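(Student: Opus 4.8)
The plan is to produce $m_n(x_1,\dots,x_n)$ as the cohomology class of the Massey cocycle of an explicit defining system coming from the homotopy transfer theorem. Because the condition $m_k=0$ for $k\le n-2$ singles out the lowest possibly non-trivial multiplications, it is invariant under $A_\infty$-isomorphism; since any two transferred structures on $H$ are $A_\infty$-isomorphic by Kadeishvili's theorem, the hypothesis holds for some homotopy retract if and only if it holds for any, which both justifies the parenthetical assertion and lets me fix a convenient contraction $(i,p,h)$ with the usual side conditions. I write $a_r=i(x_r)$ for the cocycle representatives, let $\lambda_r\colon H^{\otimes r}\to A$ be the Merkulov tree sums so that $m_r=p\lambda_r$ and the homotopy primitives are $h\lambda_r$, and record the basic identity $d(h\lambda_r)=i\,m_r-\lambda_r-h\,d\lambda_r$ obtained from $dh+hd=ip-\id$.

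First I would show, by induction on $r$ using associativity of the product of $A$ and the vanishing $m_k=0$ for $k\le n-2$, that each $\lambda_r(x_i,\dots,x_{i+r-1})$ is a cocycle and that $d(h\lambda_r)=-\lambda_r$ for $r\le n-2$. This says exactly that $a_{ij}:=-h\lambda_{j-i+1}(x_i,\dots,x_j)$ (for $2\le j-i+1\le n-2$), together with $a_{ii}=a_i$, obey the defining-system relations $d a_{ij}=\sum_{i\le\ell<j}\bar a_{i\ell}\,a_{\ell+1,j}$ up to the standard bar signs, and hence form the lower strata of a defining system for $\langle x_1,\dots,x_n\rangle$. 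The top tree sum $\lambda_n(x_1,\dots,x_n)=\sum_{k=1}^{n-1}\pm\,\mu\bigl(h\lambda_k\otimes h\lambda_{n-k}\bigr)$ then coincides, up to sign, with the candidate Massey cocycle $\sum_\ell\bar a_{1\ell}\,a_{\ell+1,n}$ assembled from these entries, and $m_n=p\lambda_n$ is its class.

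The hard part is the layer of length $n-1$, where $m_{n-1}$ is not assumed to vanish. Here the basic identity gives $d(h\lambda_{n-1})=i\,m_{n-1}-\lambda_{n-1}$, so $-h\lambda_{n-1}$ misses being a legitimate entry by precisely $i\,m_{n-1}(x_1,\dots,x_{n-1})$ (and symmetrically for $x_2,\dots,x_n$), and consequently $\lambda_n$ is in general not closed: its differential is $\pm\,i\,m_{n-1}(x_1,\dots,x_{n-1})\cdot a_n\pm a_1\cdot i\,m_{n-1}(x_2,\dots,x_n)$. The observation that rescues the argument is that each of these two cocycles is a boundary, since its class is $m_2\bigl(m_{n-1}(x_1,\dots,x_{n-1}),x_n\bigr)$, respectively $m_2\bigl(x_1,m_{n-1}(x_2,\dots,x_n)\bigr)$, and both vanish because $m_2=0$. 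I would then correct $\lambda_n$ by primitives of these boundaries to obtain an honest Massey cocycle $z$ of a genuine defining system, so that $[z]\in\langle x_1,\dots,x_n\rangle$, and compare $[z]$ with $m_n=p\lambda_n$. The residual discrepancy is where I expect the real work: it must be shown to lie in the indeterminacy of $\langle x_1,\dots,x_n\rangle$. For this I would invoke Theorem \ref{General}(ii), which under our hypothesis forces the correction term to lie in $\sum_{j=1}^{n-1}\Img(m_j)=\Img(m_{n-1})$, and combine it with the fact that $m_{n-1}(x_1,\dots,x_{n-1})\in\langle x_1,\dots,x_{n-1}\rangle$ and $m_{n-1}(x_2,\dots,x_n)\in\langle x_2,\dots,x_n\rangle$ (the inductive case of the present theorem), since the outer indeterminacy of the $n$-fold product is generated exactly by such lower products of the initial and terminal segments.

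Finally the global sign $\varepsilon=(-1)^{1+|x_{n-1}|+|x_{n-3}|+\cdots}$ would be fixed by tracking the Koszul and bar signs accumulated in passing from the $A_\infty$ tree conventions to the classical defining-system conventions, exactly as in Theorems \ref{generalintro} and \ref{adaptadointro}; this is routine once $z$ has been identified with a defining-system cocycle. The delicate step throughout is the control of the length-$(n-1)$ layer and the verification that its contribution is genuinely absorbed into the indeterminacy rather than pushing $m_n$ outside the Massey set.
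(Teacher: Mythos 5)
Your overall strategy---transfer the structure through a contraction, use the elements $h\lambda_{j-i+1}(x_i,\dots,x_j)$ as defining-system entries, and identify $\lambda_n(x_1,\dots,x_n)$ with the Massey cocycle---is the same as the paper's, and your treatment of the layers $r\le n-2$ is sound. The genuine gap is in your handling of the layer of length $n-1$: you miss the one observation on which the paper's proof turns, namely that the hypotheses already force $m_{n-1}$ to vanish on the relevant subsegments. Indeed, since $\left\langle x_1,\dots,x_n\right\rangle\neq\emptyset$, every proper subsegment product is trivial, so in particular $0\in\left\langle x_1,\dots,x_{n-1}\right\rangle$ and $0\in\left\langle x_2,\dots,x_n\right\rangle$; applying Theorem \ref{General}(ii) to the element $0$ of these sets gives $\pm\, m_{n-1}(x_1,\dots,x_{n-1})=0+\Gamma$ with $\Gamma\in\sum_{j\le n-2}\Img(m_j)=0$ by hypothesis, and likewise for $x_2,\dots,x_n$ (this is equation (\ref{Ecuat}) in the paper). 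Consequently the obstruction cocycles $i\,m_{n-1}(\cdots)\cdot a_n$ and $a_1\cdot i\,m_{n-1}(\cdots)$ that you try to correct for are identically zero, $-h\lambda_{n-1}$ is already a legitimate defining-system entry, the whole system closes up inside $B=KdA$ (the paper defines the top entries as $K\bigl(\sum_{i<l<j}\bar a_{il}a_{lj}\bigr)$), and Theorem \ref{AdaptadoSaleMassey} (equivalently Theorem \ref{Lemita}) finishes the proof with no residual discrepancy whatsoever.

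Your proposed repair, by contrast, does not close. After correcting $\lambda_n$ by primitives you obtain a class $[z]\in\left\langle x_1,\dots,x_n\right\rangle$ differing from $\pm m_n(x_1,\dots,x_n)$ by an uncontrolled amount, and neither fact you invoke controls it: Theorem \ref{General}(ii) only places that difference in $\Img(m_{n-1})$, the image over \emph{all} of $H^{\otimes (n-1)}$, which has no reason to preserve membership in the Massey set; and the claim that the indeterminacy of the $n$-fold product is ``generated exactly by lower products of the initial and terminal segments'' is false for $n\ge 4$---higher Massey product sets are not cosets of an indeterminacy subgroup, which is precisely why the paper works with explicit defining systems rather than indeterminacies. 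Moreover, the membership statements $m_{n-1}(x_1,\dots,x_{n-1})\in\left\langle x_1,\dots,x_{n-1}\right\rangle$ you appeal to are strictly weaker than the vanishing that is actually available (and needed). Once the vanishing is in place, your entire third paragraph becomes unnecessary.
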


We remark that results relating higher Massey products and higher multiplications on an $A_\infty$ algebra already appear in \cite{lu2009infinity,stasheff1970h}. Dually, in the sense of Eckmann-Hilton, results relating higher Whitehead products and higher brackets on an $L_\infty$ algebra were proved by the authors in \cite{HigherWhitehead}.

\subsection{Preliminaries} \label{Pre} In this paper graded objects are always assumed over $\Z$, with upper grading (in particular differentials raise the degree by 1 and all complexes are of cochains), and all algebraic structures are considered over a field $\K$.

Given $V$ a graded vector space define the \emph{tensor coalgebra} on $V$ as the graded coalgebra  $T(V)= \bigoplus_{n\geq 0}T^n(V) =\bigoplus_{n\geq 0}V^{\otimes n}$, where $1\in T^0(V)=\K$ is the counit, $\Delta (1)=1\otimes 1$, and $\Delta(v_1\otimes \cdots \otimes v_n)=\sum_{i=0}^n (v_1\otimes \cdots \otimes v_{i}) \otimes (v_{i+1}\otimes \cdots \otimes v_n).$

Recall that an \emph{A$_\infty$ algebra} is a graded vector space $A=\{A^n\}_{n\in \Z}$ together with linear maps $m_k:A^{\otimes k}\to A$ of degree $2-k$, for $k\geq 1$, satisfying the \emph{Stasheff identities} for every $i\geq1$:
 \begin{equation}\label{highermu}
\sum_{k=1}^i\, \sum_{n=0}^{i-k}(-1)^{k+n+kn}m_{i-k+1}({\rm id}^{\otimes n}\otimes m_k\otimes{\rm id}^{\otimes i-k-n})
=0.
\end{equation}
A {\em  differential graded algebra} (DGA), is an $A_\infty$ algebra for which $m_k=0$ for all $k\ge 3$. An $A_\infty$ algebra is \emph{minimal} if $m_1=0$.
Observe that $A_\infty$ algebra structures on   $A$ are in bijective correspondence with codifferentials on the tensor coalgebra $ T\left(sA\right)$ on the suspension of $A$, $(sA)^n=A^{n-1}$.
Indeed, a codifferential $\delta$ on $T\left(sA\right)$ is determined by a degree $1$ linear map $T^+\left(sA\right)\to sA$ which is written as the sum of linear maps $g_k:T^k\left(sA\right)\to sA$, $k\geq 1.$ In fact, $\delta$ is written as the sum of coderivations,
\begin{equation}\label{deltak}
\delta=\sum_{k\ge 1}\delta_k,\quad \delta_k\colon T\left(sA\right)\to T\left(sA\right),
\end{equation}
each of which being the extension as a coderivation of the corresponding $g_k$,
\begin{equation}\label{hk}
\delta_k\left(sa_1\otimes ... \otimes sa_p\right)=\sum_{i=1}^{p-k}\pm\, sa_1\otimes ... \otimes sa_{i-1}\otimes g_k\left(sa_i\otimes ... \otimes sa_{i+k-1}\right)\otimes sa_{i+k}\otimes ... \otimes sa_p.
\end{equation}
Observe, that each $\delta_k$ decreases the word length by $k-1$, that is, $\delta_k T^p\left(sA\right) \subset T^{p-k+1}\left(sA\right)$ for any $p$.

Then, the operators $\left\{m_k\right\}_{k\ge 1}$ on $A$ and the maps $\{g_k\}_{k\ge 1}$ (and hence $\delta$) uniquely determine  each other as follows:
\begin{equation}\label{relacion}
\begin{split}
m_k 	&=		s^{-1}\circ g_k\circ s^{\otimes k}\colon  A^{\otimes^k}\to A,\\[0.2cm]
g_k		&=		(-1)^{\frac{k(k-1)}{2}}s\circ m_k\circ\left(s^{-1}\right)^{\otimes k}\colon T^k\left(sA\right)\to sA.
\end{split}
\end{equation}
Note  that if $A$ is a DGA, then the corresponding codifferential $\delta$ on $T\left(sA\right)$  has only linear and quadratic part, $\delta=\delta_1+\delta_2$, determined by
 $$
g_1sa=-sda,\quad g_2(sa_1\otimes sa_2)=-(-1)^{|a_1|}s(a_1a_2).
 $$
In other words, $(T(sA),\delta)$ is the {\em bar construction of $A$}.

An \emph{$A_\infty$ morphism} $f\colon A\to A'$ between two $A_\infty$ algebras is a family of linear maps (components) $f_k\colon A^{\otimes k}\to A'$ of degree $1-k$ such that the following equation holds for every $i\geq1$:
\begin{eqnarray}\label{highermap}
\sum_{\substack{i=r+s+t \\ s\geq 1 \\ r,t\geq 0}} (-1)^{r+st}f_{r+1+t} \left(\id^{\otimes r}\otimes m_s\otimes \id^{\otimes t}\right)= \sum_{\substack{1\leq r \leq i \\ i=i_1+\cdots+i_r}} (-1)^s m_r \left(f_{i_1}\otimes \cdots \otimes f_{i_r}\right)
\end{eqnarray} being $s=\sum_{\ell=1}^{r-1}\ell(i_{r-\ell}-1).$ It is said to be an \emph{$A_\infty$ quasi-isomorphism} if $f_1\colon(A,m_1)\to (A',m_1')$ is a quasi-isomorphism of cochain complexes. Observe that
$A_\infty$ morphisms from $A$ to $A'$ are in bijective correspondence with differential graded coalgebra (DGC) morphisms $\left(T\left(sA\right),\delta\right)\to \left(T\left(sA'\right),\delta'\right)$ being $\delta$ and $\delta'$ the codifferentials determining the $A_\infty$ algebra structures.
Indeed, a DGC morphism
$$f\colon \left(T\left(sA\right),\delta\right)\longrightarrow\left(T\left(sA'\right),\delta'\right)
$$
is determined by  $\pi
f\colon T\left(sA\right)\to sA'$ ($\pi$ denotes the projection onto the indecomposables) which can be written as
$ \sum_{k\ge 1}(\pi f)_k$, where $(\pi f)_k\colon T^k\left(sA\right)\to sA'$. Note that the collection of linear maps $\{(\pi f)_k\}_{k\ge 1}$ is in one-to-one correspondence with a system $\left\{f_k\right\}_{k\ge1}$ of
linear maps $f_k\colon  A^{\otimes^k}\to A'$ of degree $1-k$ defining an $A_\infty$ morphism. Each $f_k$ and $(\pi f)_k$ determines the other by:
$$
\begin{aligned}
f_k&=s^{-1}\circ (\pi f)_k\circ s^{\otimes k},\\
(\pi f)_k&=(-1)^{\frac{k(k-1)}{2}}s\circ f_k\circ \left(s^{-1}\right)^{\otimes k}.\\
\end{aligned}
$$

In \cite[Theorem 1]{Kadeishvili}, T. Kadeishvili proved:

\begin{theorem}\label{kadei}
Given a DGA $A$ there is a minimal $A_\infty $ structure on its cohomology $H$, unique up to $A_\infty$ isomorphism, for which $H$ and $A$ are quasi-isomorphic $A_\infty$ algebras.
\end{theorem}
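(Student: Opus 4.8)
The plan is to prove this by homotopy transfer, executed at the level of the bar constructions recalled in the Preliminaries. Since we work over a field $\K$, the cohomology $H=H(A,m_1)$ is a direct summand of $A$, so I can choose a homotopy retract of cochain complexes: a projection $p\colon (A,m_1)\to (H,0)$, an inclusion $i\colon (H,0)\to (A,m_1)$ of cocycle representatives with $pi=\id_H$, and a degree $-1$ homotopy $h\colon A\to A$ satisfying $\id_A-ip=m_1h+hm_1$. After the standard adjustment one may further assume the side conditions $hi=0$, $ph=0$ and $h^2=0$.

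First I would lift this contraction to the tensor coalgebras. The tensor extensions of $p$ and $i$, together with a degree $-1$ homotopy $\Phi$ on $T(sA)$ assembled from $sh$ by the classical tensor trick, exhibit a contraction of $(T(sA),\delta_1)$ onto $(T(sH),0)$, where $\delta_1$ is the coderivation induced by $m_1$ and the target carries the zero differential because $m_1$ vanishes on $H$. Next I would treat the full bar differential $\delta=\delta_1+\delta_2$ as a perturbation of $\delta_1$ by the quadratic coderivation $\delta_2$. Because $\delta_2$ strictly lowers word length ($\delta_kT^p(sA)\subset T^{p-k+1}(sA)$), the composite $\Phi\delta_2$ is locally nilpotent, so the Basic Perturbation Lemma applies and transports $\delta_2$ to a perturbation of the zero differential on $T(sH)$.

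The perturbation lemma outputs a codifferential $\delta'$ on $T(sH)$ --- automatically a coderivation with $(\delta')^2=0$ --- together with a perturbed DGC morphism $T(sH)\to T(sA)$. By the dictionary recalled above, $\delta'$ determines an $A_\infty$ structure $\{m_k\}$ on $H$ and the perturbed morphism determines an $A_\infty$ morphism $H\to A$. This structure is minimal, since the linear part $\pi\delta'|_{T^1(sH)}$ is the transferred internal differential, which is zero; and the morphism has linear component $i$, a quasi-isomorphism, so $H$ and $A$ are quasi-isomorphic $A_\infty$ algebras. This proves existence. Equivalently, one may run Kadeishvili's original induction, defining $m_n=p\,D_n$ and correcting by $-h\,D_n$, where $D_n$ is the obstruction built from the $m_k$ with $k<n$; solvability at each stage is exactly the statement that this obstruction is a coboundary, which the homotopy $h$ witnesses.

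For uniqueness up to $A_\infty$ isomorphism, the crux --- and the step I expect to be the main obstacle --- is the invertibility of $A_\infty$ quasi-isomorphisms. Concretely, I would prove that an $A_\infty$ quasi-isomorphism $f$ whose source and target are both minimal is an $A_\infty$ isomorphism: in the bar picture $f_1$ is then a quasi-isomorphism between complexes with zero differential, hence an isomorphism, and one builds the components $(f^{-1})_k$ by induction, each step reducing to a triangular linear equation with invertible leading term $f_1^{-1}$. More generally, $A_\infty$ quasi-isomorphisms admit quasi-inverses, so any minimal $A_\infty$ structure on $H$ that is quasi-isomorphic to $A$ can be connected to the transferred one through $A$, yielding a quasi-isomorphism between the two minimal structures on $H$, which by the above is an isomorphism. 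Assembling these comparisons gives uniqueness.
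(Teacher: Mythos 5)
Your proposal is correct in outline, but it takes a genuinely different route from the paper's proof, which is the direct Kadeishvili induction: fix a cocycle-choosing quasi-isomorphism $j_1\colon (H,0)\hookrightarrow (A,d)$, and assuming $\{m_k\}_{k<p}$ and $\{j_k\}_{k<p}$ are built, observe that the obstruction $U_p$ (assembled from the lower data via the Stasheff and morphism identities) lands in the cocycles of $A$; then define $m_p$ to be the class of $U_p$ in $H$ and $j_p$ a primitive of $j_1m_p-U_p$. That is exactly the alternative you mention in one sentence at the end of your existence argument; your main argument instead runs the homotopy transfer theorem through the bar construction and the Basic Perturbation Lemma, a mechanism the paper keeps separate (its Theorem \ref{HTT}, quoted without proof) and does not use to establish Theorem \ref{kadei}. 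The perturbative route buys explicit closed formulas and the whole perturbed contraction at once, but you should not pass over the phrase ``automatically a coderivation'': the Basic Perturbation Lemma by itself only produces a differential on $T(sH)$, and the fact that this differential is a coderivation (hence encodes an $A_\infty$ structure) requires the coalgebra-compatible version of the lemma, which rests on the tensor-trick homotopy respecting the coalgebra filtration; this is true and standard, but it is precisely the content of the perturbation-theory references the paper cites, so it deserves a citation rather than the word ``automatically.''

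On uniqueness your proposal does more than the paper, whose printed proof is silent on that half of the statement (it defers to Kadeishvili). Your key lemma --- an $A_\infty$ quasi-isomorphism between minimal $A_\infty$ algebras has invertible linear part, hence is an $A_\infty$ isomorphism by triangular induction on components --- is correct and is indeed the crux. One caution: invoking the general fact that ``$A_\infty$ quasi-isomorphisms admit quasi-inverses'' risks circularity, since a common proof of that fact goes through the very minimal model theorem you are proving. You can sidestep it entirely within your own framework: the perturbed contraction also yields a projection $\infty$-quasi-isomorphism $P\colon A\to (H,\{m_k\})$ extending $p$ with $P\circ I=\id$; then for any other minimal structure $(H,\{m_k'\})$ equipped with a quasi-isomorphism $g\colon (H,\{m_k'\})\to A$, the composite $P\circ g$ is a quasi-isomorphism between minimal structures, hence an isomorphism by your lemma, and no general quasi-inverse theorem is needed.
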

\begin{proof}
Choose any cochain map inclusion $j\colon (H,0)\stackrel{\simeq}{\hookrightarrow}(A,d)$ which is necessarily a quasi-isomorphism and set $j_1=j$, $m_1=0$. Assume  that $j_k$ and $m_k$ are defined for $k<p$ satisfying equations (\ref{highermu}) and (\ref{highermap}). In the latter, and for $i=p$, observe that $j_1m_p-dj_p$ is a map $U_p\colon H^{\otimes p}\to A$ involving only $\{m_k\}_{k<p}$ and $\{j_k\}_{k<p}$, whose image lies in the cycles of $A$. Define $m_p$ to be the projection of $U_p$  onto $H$ and
$j_p$ so that $dj_p=j_1m_n-U_p$. Then, $j=\{j_k\}_{k\ge1}\colon (H,\{m_k\}_{k\ge 2})\stackrel{\simeq}{\longrightarrow}$ in an $A_\infty$ quasi-isomorphism.A
\end{proof}

Observe that any $A_\infty$ structure on $H$ quasi-isomorphic to $A$ arises inductively in this way. There is however a particular procedure of obtaining such a structure whenever one exhibits $H$ as a ``homotopy retract'' of $A$:\\

A \emph{contraction} (of $M$ onto $N$) is  a diagram of the form
\begin{center}
\begin{tikzcd}
M \ar[loop left]{l}{K} \ar[shift left]{r}{q} & N, \ar[shift left]{l}{i}
\end{tikzcd}
\end{center} where $M$ and $N$ are cochain complexes and $q$ and $i$ are cochain maps such that $qi=\id_N$ and $iq\simeq \id_M$ via a chain homotopy $K$ which satisfies $K^2=Ki=qK=0$. We often denote it simply by  $(M,N,i,q,K)$. \\

We will be using the following particular statement of the  \emph{homotopy transfer theorem}, see for instance \cite{kontsevich2000deformations,Merkulov}, closely related to the \emph{homological perturbation lemma}, see for instance \cite{gugenheim1986perturbations,huebschmann1991small}:

\begin{theorem} \label{HTT} Given a contraction $(A,H,i,q,K)$  of the DGA $A$ onto its cohomology $H$ there exists a minimal $A_\infty$ algebra structure $\left\{m_k\right\}_{k\ge 2}$ on $H$, and an $A_\infty$ algebra quasi-isomorphism $A\stackrel{I}{\longleftarrow}H$ extending $i$.\hfill$\square$
\end{theorem}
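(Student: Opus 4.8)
The plan is to realise the transferred structure as the image, under the perturbation lemma, of the bar codifferential $\delta=\delta_1+\delta_2$ of $A$, using throughout the dictionaries recalled above between codifferentials on $T(sH)$ and $A_\infty$ structures, and between DGC morphisms and $A_\infty$ morphisms. Thus I would work entirely on the tensor coalgebras, translating back to the operators $m_k$ and the components $I_k$ only at the very end via the formulas (\ref{relacion}).

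First I would lift the contraction $(A,H,i,q,K)$ to a contraction of cochain complexes $(T(sA),T(sH),\mathbf I,\mathbf Q,\mathbf K)$ with respect to the \emph{linear} parts of the codifferentials. Suspending and extending $i$ and $q$ multiplicatively gives coalgebra maps $\mathbf I=\sum_{k\ge0}(si\,s^{-1})^{\otimes k}$ and $\mathbf Q=\sum_{k\ge0}(sq\,s^{-1})^{\otimes k}$, while $K$ is promoted by the classical \emph{tensor trick} to
$$
\mathbf K=\sum_{k\ge1}\ \sum_{j=0}^{k-1}(si q\,s^{-1})^{\otimes j}\otimes(sK\,s^{-1})\otimes\id^{\otimes(k-1-j)} .
$$
Here the side conditions $K^2=Ki=qK=0$ are exactly what is needed to verify the contraction identities $\mathbf K^2=\mathbf K\mathbf I=\mathbf Q\mathbf K=0$ and that $\mathbf K$ is a homotopy between $\id$ and $\mathbf I\mathbf Q$ for $\delta_1$. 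I would then regard $\delta_2$, the quadratic part carrying the product of $A$, as a perturbation of $\delta_1$. Because $\delta_2$ lowers word length by one whereas $\mathbf K$ preserves it, the composite $\mathbf K\delta_2$ is locally nilpotent -- it vanishes on $T^{\le n}(sH)$ after $n$ iterations -- so the series below are finite on each word and the perturbation lemma applies with no convergence issue.

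Applying the lemma produces a new contraction whose perturbed differential on $T(sH)$ is
$$
\delta^H=\mathbf Q\,\delta_2\sum_{n\ge0}(\mathbf K\,\delta_2)^n\,\mathbf I ,
$$
and whose perturbed inclusion is $\mathbf I'=\sum_{n\ge0}(\mathbf K\,\delta_2)^n\,\mathbf I$. Reading $\delta^H$ through the bijection (\ref{relacion}) yields maps $m_k\colon H^{\otimes k}\to H$ with $m_1=0$ (as $H$ carries the zero differential), hence a minimal $A_\infty$ structure; reading $\mathbf I'$ through the morphism dictionary yields an $A_\infty$ morphism $I\colon H\to A$ whose first component is $I_1=i$, which is therefore a quasi-isomorphism. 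The main obstacle is not convergence but keeping the construction inside the category of coaugmented DG coalgebras: the tensor homotopy $\mathbf K$ is not itself a coderivation, so one must check that the perturbation-lemma output $\delta^H$ is genuinely a \emph{codifferential} (a square-zero coderivation) and that $\mathbf I'$ is genuinely a DGC morphism. This follows because $\delta_2$ is a coderivation and the series respects the cofiltration by word length, but it is precisely where the coalgebraic bookkeeping -- together with the signs hidden in the suspensions $s,s^{-1}$ and in (\ref{hk}) -- has to be carried out carefully. Equivalently, one could bypass the perturbation lemma and define $m_k$ directly by Merkulov's sum over planar binary trees, with leaves labelled by $i$, internal edges by $K$, the vertices by the product of $A$, and the root by $q$; verifying the Stasheff identities (\ref{highermu}) then becomes the tree-level analogue of this coderivation check.
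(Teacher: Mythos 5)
Your proposal is correct in outline, but note that the paper never actually proves Theorem \ref{HTT}: it quotes it from the literature (the references \cite{kontsevich2000deformations,Merkulov} for the transfer theorem and \cite{gugenheim1986perturbations,huebschmann1991small} for the perturbation lemma), and what it records instead --- and what all later proofs manipulate --- is the explicit Merkulov-type recursion (\ref{FormulaLambda}) together with $m_k=q\circ\lambda_k$ and $I_k=K\circ\lambda_k$. So you follow the perturbation-lemma route that the paper only cites, while the paper works with the recursive description that you relegate to your closing sentence. The trade-off is genuine: your route produces the whole perturbed contraction (a projection and a homotopy as well as $\delta^H$ and $\mathbf I'$) and makes $(\delta^H)^2=0$, i.e.\ the Stasheff identities, come for free from the perturbation lemma; the recursion $\lambda_k=m\bigl(\sum_{s}(-1)^{s+1}K\lambda_s\otimes K\lambda_{k-s}\bigr)$ is instead exactly what the inductive arguments in Theorems \ref{AdaptadoSaleMassey} and \ref{Teoremilla} need, which is why the paper states the theorem in that form and remarks that the tree/coalgebra description would not shorten those proofs.

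One step in your write-up is asserted with the wrong reason, and it is precisely the step you identify as the crux. That $\delta^H$ is a coderivation and $\mathbf I'$ a DGC morphism does \emph{not} follow from ``$\delta_2$ is a coderivation and the series respects the cofiltration by word length'': for an arbitrary contracting homotopy of $(T(sA),\delta_1)$ onto $(T(sH),0)$ satisfying the side conditions, both statements can fail even though $\delta_2$ is still a coderivation and all word-length counts are unchanged. What saves the construction is the specific tensor-trick form of $\mathbf K$, which is compatible with deconcatenation: up to Koszul signs, $\Delta\mathbf K=(\mathbf K\otimes\id+\mathbf I\mathbf Q\otimes\mathbf K)\Delta$. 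Propagating this identity through the perturbation series, by induction on word length, is what keeps the perturbed data inside DG coalgebras; this is exactly the content of \cite{huebschmann1991small} (and of Gugenheim--Lambe--Stasheff), so the gap is fillable by citation, but as written your sentence names the obstacle and then waves it away rather than giving the identity that resolves it.
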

In this case, the transferred higher products $\left\{m_k\right\}$ and the components $\left\{I_k\right\}$ of $I$ are  given inductively as follows: formally, set $K\lambda_1=-i$, and define $\lambda_k\colon H^{\otimes n}\to A$, $k\geq 2$, recursively by
\begin{equation}\label{FormulaLambda}
\lambda_k=m\left(\sum_{s=1}^{k-1} (-1)^{s+1} K\lambda_s\otimes K\lambda_{k-s}\right),
\end{equation}
where $m$ denotes the multiplication on $A$. Then,  	
\begin{equation*}
m_k =q\circ\lambda_k \qquad \textrm{ and } \qquad I_k = K\circ\lambda_k \qquad \textrm{for all } k\geq 2.
\end{equation*}

There are equivalent descriptions of  $\left\{m_k\right\}$ and $\left\{I_k\right\}$ in terms of labeled binary trees but the  proofs in the results to follow are not shorter using this approach.

\begin{remark}\label{ExisteRetracto}\rm Note that contractions $(A,H,i,q,K)$ are in bijective correspondence with decompositions of $A$ of the form
$$
A=B\oplus dB\oplus C,
$$
where $B$ is a complement of $\Ker d$ (and thus $d\colon B\stackrel{\cong}{\to} dB$) and $C\cong H$.
Indeed, for such a decomposition define $i\colon H\cong C\hookrightarrow A$, $q\colon A\twoheadrightarrow C\cong H$ and  $K(B)=K(C)=0$, $K\colon d B \stackrel{\cong}{\to} B$.
Conversely, given a contraction
 $(A,H,i,q,K)$   define $
B=KdA$ and $C=\Img i$.
 \end{remark}

  In what follows,  write  $\overline a = (-1)^{|a|+1}a$
 where $a$ is a homogeneous element of $A$ and $|a|$ denotes its degree. Let $x_1,x_2,x_3\in H$ be cohomology classes of a given DGA $A$ such that $x_1x_2=x_2x_3=0$. A \emph{defining system} (for the triple Massey product) is a set $\{a_{ij}\}_{0\leq i<j\leq 3, 1\leq j-i\leq 2} \subseteq A$ defined as follows:

 For $i=1,2,3$ choose a cocycle $a_{i-1,i}$ representative of $x_i$. These are $\{a_{01},a_{12},a_{23}\}$.
	
	For $0 \leq i < j \leq 3$ and $j-i=2$, choose $a_{ij}\in A$ with the property that $d(a_{ij})=\overline{a}_{i,i+1}a_{i+1,j}$. These are $\{a_{02},a_{13}\}$.

Recall that the \emph{triple Massey product}  is defined as the set (empty if the condition $x_1x_2=x_2x_3=0$ is not satisfied):
 $$\langle x_1,x_2,x_3 \rangle=\left\{[\overline{a}_{01}a_{13} + \overline{a}_{02}a_{23}],\, \{a_{ij}\} \textrm{ is a defining system}\right\}\subseteq H^{s-1},$$
 with $ s=|x_1|+|x_2|+|x_3|$.

It is also classical to define $$\operatorname{In}(x_1,x_2,x_3)=x_1H^{|x_2|+|x_3|-1}+H^{|x_1|+|x_2|-1}x_3$$ as the \emph{indeterminacy subgroup}, and regard the triple Massey products set as an element in the quotient $H^{s-1}/\operatorname{In}(x_1,x_2,x_3)$.
However, this does not simplify the arguments in the paper as higher products on a given $A_\infty$ structure are not determined ``up to indeterminacies''.

Higher Massey products are inductively defined. Let $x_1,{\dots},x_n\in H$ be such that for $1\leq i<j\leq n$ and $ j-i\leq n-2$, $\langle x_i,{\dots},x_j \rangle$ is trivial, that is, it contains the zero class.  A \emph{defining system} (for the $n$th order Massey product) is a set $\{a_{ij}\}_{0\le i<j\le n, 1\leq j-i \leq n-1} \subseteq A$ defined as follows.
\begin{itemize}
	\item For $i=1,{\dots},n$ choose a cocycle $a_{i-1,i}$ representative of $x_i$.
	\item For $0 \leq i < j \leq n$ and $2\leq j-i\leq n-1$, choose $a_{ij}\in A$ with the property that $$d(a_{ij})=\sum_{0\leq i<k<j\leq n} \overline{a}_{ik}a_{kj} .$$ Their existence follows from the triviality of $\langle x_i,{\dots},x_j \rangle$.
\end{itemize} The \emph{$n$th order Massey product} is then the set $$\langle x_1,{\dots},x_n \rangle=\Bigl\{\Bigl[\sum_{0\leq i<k<j\leq n} \overline{a}_{ik}a_{kj}\Bigr],\, \{a_{ij}\} \textrm{ is a defining system}\Bigr\}\subseteq H^{s+2-n}$$
 where $s=\sum_{i=1}^n |x_i|$. If some $\langle x_i,{\dots},x_j \rangle$ is not trivial define $\langle x_1,{\dots},x_n \rangle$ as the empty set.\\

In what follows any $A_\infty$ structure considered in the cohomology $H$ of a given DGA $A$ is always assumed to be quasi-isomorphic to $A$.

\begin{definition}\label{detection}{\em The $n$th multiplication $m_n$ of a given $A_\infty$ structure  is said to \emph{recover the Massey product element $x\in \left\langle x_1,{\dots},x_n\right\rangle$} if, up to sign,  $ m_n(x_1,{\dots},x_n)= x$.

 More generally, we say that  $m_n$ {\em detects the Massey product set $\left\langle x_1,{\dots},x_n\right\rangle$}  if, up to sign,  $ m_n(x_1,{\dots},x_n)\in \left\langle x_1,{\dots},x_n\right\rangle$.}
 \end{definition}

\section{Recovering Massey products} \label{MasProd}

We begin by showing that given a higher Massey product on the cohomology $H$ of a given DGA $A$, there is always a particular $A_\infty$ structure on $H$ which recovers it. In general, an arbitrary $A_\infty$ structure only recovers the given Massey product up to multiplications of lower arity.

\begin{theorem}  \label{General} Let $x\in \left\langle x_1,{\dots},x_n\right\rangle$, with $n\ge 3$.

\begin{itemize}

\item[(i)] There is an $A_\infty$ structure on $H$ which recovers $x$.

\item[(ii)]  In general, for any $A_\infty$ structure on $H$, $$\varepsilon\, m_n\left(x_1,{\dots},x_n\right)=x + \Gamma, \qquad \Gamma\in \sum_{j=1}^{n-1}\Img\left(m_j\right),\quad \varepsilon=(-1)^{\sum_{j=1}^{n-1}(n-j)|x_j|}.$$
    \end{itemize}
\end{theorem}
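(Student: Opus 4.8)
The plan is to realise every $A_\infty$ structure on $H$ through Kadeishvili's inductive construction (Theorem~\ref{kadei}), which by the observation recorded after it produces all of them, and then to read off $m_n(x_1,\dots,x_n)$ from the lowest term of the $A_\infty$ morphism equation for the accompanying quasi-isomorphism $j=\{j_k\}\colon H\to A$. Specialising (\ref{highermap}) at $i=n$, using that on $A$ only $m_1=d$ and $m_2=\mu$ survive, and isolating $j_1m_n$ on the source side and $dj_n$ on the target side exhibits $j_1m_n-dj_n$ as a cocycle $U_n$ given purely by lower data:
\[
U_n(x_1,\dots,x_n)=\sum_{a+b=n}\pm\,j_a(x_1,\dots,x_a)\cdot j_b(x_{a+1},\dots,x_n)-\sum_{s<n}\pm\,j_{n-s+1}\bigl(x_1,\dots,m_s(\dots),\dots,x_n\bigr).
\]
Since the projection $A\twoheadrightarrow H$ sends a cocycle to its class, $m_n(x_1,\dots,x_n)=[U_n(x_1,\dots,x_n)]$. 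The first sum is a Massey-type cocycle assembled from the fillers $j_k(x_p,\dots,x_q)$, while the second records the failure of the intermediate multiplications to vanish on consecutive substrings.

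To prove (i) I would feed a chosen defining system $\{a_{ij}\}$ for $x$ into this recursion. Inductively on the length $\ell=q-p+1$ of a consecutive substring, set $j_1(x_i)=a_{i-1,i}$ and suppose $m_\ell(x_p,\dots,x_q)=0$ and $j_\ell(x_p,\dots,x_q)=\pm a_{p-1,q}$ already arranged for all shorter strings. Then the second sum of $U_\ell(x_p,\dots,x_q)$ vanishes and the first equals $\pm\,d(a_{p-1,q})$ by the defining-system relation; hence $m_\ell(x_p,\dots,x_q)=[U_\ell(x_p,\dots,x_q)]=0$ and one may take the primitive $j_\ell(x_p,\dots,x_q)=\pm a_{p-1,q}$ (the values of $j_\ell$ on the remaining tensors are irrelevant and chosen freely within Kadeishvili's constraint). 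At length $n$ the second sum vanishes again and $U_n(x_1,\dots,x_n)=\pm\sum_{0<k<n}\overline{a}_{0k}a_{kn}$, the defining-system representative of $x$, whence $m_n(x_1,\dots,x_n)=\pm x$.

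For (ii) I would run the same identity on an arbitrary realisation, where the intermediate products need not vanish. Since $\langle x_1,\dots,x_n\rangle\neq\emptyset$ one can still fix a genuine defining system $\{a_{ij}\}$ with $a_{i-1,i}=j_1(x_i)$ and write each filler as $j_\ell(x_p,\dots,x_q)=\pm a_{p-1,q}+e_{p-1,q}$. Substituting this into $U_n$ separates a purely $a$-part, equal to the representative $\pm\sum_{0<k<n}\overline{a}_{0k}a_{kn}$ of $x$, from a remainder collecting the discrepancies $e_{p-1,q}$ together with the lower-multiplication sum. A strong induction on the arity handles these: applying the statement of (ii) to each subproduct $\langle x_p,\dots,x_q\rangle$ (which contains $0$) with the element $0$ gives $\varepsilon_s\,m_s(x_p,\dots,x_q)=\Gamma_s\in\sum_j\Img(m_j)$, so every intermediate multiplication already lies in $\sum_{j<n}\Img(m_j)$. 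Tracing this through shows the entire remainder has cohomology class in $\sum_{j=1}^{n-1}\Img(m_j)$, and passing to cohomology yields $\varepsilon\,m_n(x_1,\dots,x_n)=x+\Gamma$ with $\Gamma\in\sum_{j=1}^{n-1}\Img(m_j)$; part (i) is then the special case in which the data are chosen so that all lower products on the relevant strings vanish, forcing $\Gamma=0$.

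The main obstacle is exactly this last bookkeeping in (ii): because neither the product sum nor the lower-multiplication sum of $U_n$ is separately a cocycle, one cannot argue term by term but must track how the discrepancies $e_{p-1,q}$ propagate and cancel so that the total remainder is a coboundary plus a class in $\sum_{j<n}\Img(m_j)$. The second delicate point is the global sign: pinning down $\varepsilon=(-1)^{\sum_{j=1}^{n-1}(n-j)|x_j|}$ requires accounting for the Koszul signs in (\ref{highermap}), the suspension signs of (\ref{relacion}) relating $m_k$ to $g_k$, and the convention $\overline{a}=(-1)^{|a|+1}a$ built into the Massey product. Everything else is formal, the homological content being carried entirely by the single identity $m_n(x_1,\dots,x_n)=[U_n(x_1,\dots,x_n)]$.
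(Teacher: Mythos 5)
Your part (i) is essentially the paper's own argument: run Kadeishvili's recursion, seed $j_1$ with the cocycles of a chosen defining system, arrange inductively that $m_\ell(x_p,\dots,x_q)=0$ and $j_\ell(x_p,\dots,x_q)=\pm a_{p-1,q}$ on all consecutive substrings, and observe that $U_n$ then collapses to $\pm\sum_{0<k<n}\overline a_{0k}a_{kn}$. That part is correct and matches the paper.

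Part (ii), however, has a genuine gap, and it is exactly the one you flag yourself but then do not close. After writing $j_\ell(x_p,\dots,x_q)=\pm a_{p-1,q}+e_{p-1,q}$ and expanding, the remainder $R=U_n\mp\sum_k\overline a_{0k}a_{kn}$ is a cocycle only as a whole: it is a sum of (a) cross terms $a\cdot e$ and $e\cdot e$ in $A$, and (b) the terms $j_{n-r+1}\bigl(x_1,\dots,m_r(\dots),\dots,x_n\bigr)$, none of which is individually closed, so one cannot pass to cohomology term by term. Your proposed inductive mechanism does not repair this: applying statement (ii) to the subproducts $\langle x_p,\dots,x_q\rangle$ only tells you that $m_s(x_p,\dots,x_q)\in\Img(m_s)$, which is a tautology, and it says nothing about the cohomology class contributed by the non-closed elements $j_{n-r+1}(\dots,m_r(\dots),\dots)\in A$ --- which is precisely where the difficulty sits, since $j_{n-r+1}$ with $n-r+1\ge 2$ is not a cochain map and its arguments are arbitrary classes. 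So the "bookkeeping" you defer is not routine; it is the whole content of the claim.

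The paper sidesteps this entirely by a global argument of a different nature: pass to the bar construction $(T(sH),\delta)$ of the transferred structure, filter by word length to get the Eilenberg--Moore spectral sequence, compare it (via the DGC quasi-isomorphism induced by $H\stackrel{\simeq}{\to}A$) with the spectral sequence of $(T(sA),\delta)$, and invoke Tanr\'e's theorem \cite[Thm. V.7(6)]{tanre1984homotopie}: a nonempty $\langle x_1,\dots,x_n\rangle$ forces $sx_1\otimes\cdots\otimes sx_n$ to survive to the $(n-1)$-page and satisfy $\delta^{n-1}\,\overline{sx_1\otimes\cdots\otimes sx_n}=\overline{sx}$. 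Unwinding this gives $\Phi\in T^{\le n-1}(sH)$ with $\delta\left(sx_1\otimes\cdots\otimes sx_n+\Phi\right)=sx$, and a word-length count yields $g_n\left(sx_1\otimes\cdots\otimes sx_n\right)+\sum_{i=2}^{n-1}g_i(\Phi_i)=sx$, whence $\varepsilon\, m_n(x_1,\dots,x_n)=x+\Gamma$ with $\Gamma\in\sum_{i<n}\Img(m_i)$ by construction; the sign $\varepsilon$ falls out of the suspension identities relating $g_n$ to $m_n$. Without this spectral-sequence input (or some equally substantive replacement identifying Massey products with EMSS differentials), your outline for (ii) remains a plan rather than a proof.
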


\begin{proof} (i) We construct the $A_\infty$ structure on $H$ recovering $x$  following the algorithm in Theorem \ref{kadei}. For it note that  the map $U_p$ in the proof of that result  is explicitly given by,
\begin{align*}
U_p(y_1,...,y_p) &= \sum_{s=1}^{p-1} \varepsilon(y_1,...,y_s) \, j_s\left(y_1,...,y_s\right)j_{p-s}\left(y_{s+1},...,y_p\right) \\
&\quad + \sum_{r=2}^{p-1} \sum_{\ell=0}^{p-r}  \eta(y_1,...,y_\ell) \  j_{p-r+1}\left(y_1,...,m_r\left(y_{\ell+1},...,y_{\ell+r}\right),...,y_p\right),
\end{align*}
where $y_1,\dots,y_p\in H$, $\varepsilon(y_1,...,y_s)$  is the parity of $s+\left(p-s+1\right)\left(|y_1|+\cdots +|y_s|\right)$, and $\eta(y_1,...,y_\ell)$ is the parity of $(\ell+r)(p-\ell-r+|y_1|+\cdots+|y_\ell|)$.

 We proceed by induction on $n$. For $n=3$ choose a defining system $\left\{ a_{ij} \right\}$ for the Massey product $x\in \langle x_1,x_2,x_3\rangle$ and define $j_1\colon H\to A$ any quasi-isomorphism extending the choice $j_1(x_1)=a_{01}$, $j_1(x_2)=a_{12}$ and $j_1(x_3)=a_{23}$. As $x_1x_2=x_2x_3=0$ we have:
\begin{equation*}
U_2(x_1,x_2) = - j_1(x_1)j_1(x_2)=-a_{01}a_{12}=(-1)^{|x_1|} \overline a_{01}a_{12} = (-1)^{|x_1|}da_{02}.
\end{equation*} Analogously, $U_2(x_2,x_3)=(-1)^{|x_2|} da_{13}.$ Define $j_2\colon H^{\otimes 2} \to A$ by

\begin{equation*}
j_2(x_1,x_2) = (-1)^{|x_1|} a_{02} \qquad \textrm{and } \qquad j_2(x_2,x_3)=(-1)^{|x_2|} a_{13}.
\end{equation*} Then,

\begin{align*}
U_3(x_1,x_2,x_3) &= -(-1)^{|x_1|} j_1(x_1)j_2(x_2,x_3) + j_2(x_1,x_2)j_1(x_3)\\
&= -(-1)^{|x_1|+|x_2|} a_{01}a_{13} + (-1)^{|x_1|} a_{02}a_{23} = (-1)^{|x_2|} \left(\overline a_{01}a_{13} + \overline a_{02}a_{23} \right).
\end{align*} Hence, $$ m_3(x_1,x_2,x_3)=\left[ U_3\left(x_1,x_2,x_3\right)\right] = (-1)^{|x_2|}x .$$
Any choice of $\{m_k\}_{k>3}$ and $\{j_k\}_{k>3}$ complete the searched $A_\infty$ structure on $H$.

  Next, let $\left\{ a_{ij} \right\}$ be a defining system for  $x\in \langle x_1,...,x_n\rangle$ and assume as induction hypothesis that we have constructed $\{m_k\}_{k<n}$ and $\{j_k\}_{k<n}$ so that:
$$
m_k\left(x_{i+1},...,x_{i+k}\right) = 0 \in \left\langle  x_{i+1},...,x_{i+k} \right\rangle,\quad 2\le k<n, \quad 0 \leq i \leq n-k,
$$
$$
 j_k\left( x_{i+1},...,x_{i+k} \right)=\varepsilon_k\,  a_{i,i+k},\quad 1 \leq k <n, \quad 0 \leq i \leq n-k,
$$

with $ \varepsilon_k = (-1)^{1 + |x_{i+k-1}| + |x_{i+k-3}|+{\cdots}}$ .
 Then,
\begin{align*}
U_n(x_1,...,x_n) &= \sum_{s=1}^{n-1} \varepsilon(x_1,...,x_s) \, j_s\left(x_1,...,x_s\right)j_{n-s}\left(x_{s+1},...,x_n\right)\\
&= \sum_{s=1}^{n-1} \varepsilon_s \varepsilon(x_1,...,x_s)\, a_{0,s}a_{s,n} = \varepsilon\, \sum_{s=1}^{n-1} \overline{a}_{0,s}a_{s,n}
\end{align*}

with $\varepsilon = (-1)^{1 + |x_{n-1}| + |x_{n-3}|+{\cdots}}$. Hence, $$\varepsilon m_n(x_1,...,x_n)=\varepsilon\,\left[U_n(x_1,...,x_n)\right]=\varepsilon \,x.$$
Again, finish by completing the $A_\infty$ structure on $H$ with any choice of $\{m_k\}_{k>n}$ and $\{j_k\}_{k>n}$.

(ii) This is the Eckmann-Hilton dual of \cite[Prop. 3.1]{HigherWhitehead}. Recall (see for instance \cite[III.6]{tanre1984homotopie}) that the Eilenberg-Moore  spectral sequence of $A$ is the coalgebra spectral sequence obtained by filtering the bar construction $(T(sA),\delta)$ by the ascending filtration  $F_p=\left(sA\right)^{\otimes \leq p}$. Consider any $A_\infty$ quasi-isomorphism $ H\stackrel{\simeq}{\longrightarrow} A$ given by Theorem \ref{kadei} and the corresponding  DGC quasi-isomorphism,
$$
 (T(sH),\delta)\stackrel{\simeq}{\longrightarrow} (T(sA),\delta).
$$
Choosing the same filtration on $T(sH)$ we observe that at the $E^1$ level the induced morphism of spectral sequences is the identity on $T(sH)$. By comparison, all the terms in both spectral sequences are also isomorphic. Now, translating \cite[Thm. V.7(6)]{tanre1984homotopie}  to the spectral sequence on $T(sH)$ we obtain that if $\langle x_1,\dots,x_n\rangle$ is non empty, then the element $sx_1\otimes\dots\otimes sx_n$ survives  to  the $n-1$ page $(E^{n-1},\delta^{n-1})$. Moreover, given any $x\in \langle x_1,\ldots,x_n\rangle $, one has
$$\delta^{n-1}\,\,\overline{sx_1\otimes\dots\otimes sx_n}=\overline{sx}.
$$
Here $\overline{\,\,\cdot\,\,}$ denotes the class in $E^{n-1}$. In other words, there exists $\Phi\in T^{\le n-1}(sH)$ such that
\begin{equation}\label{ole}
\delta\left(sx_1\otimes\dots\otimes sx_n+\Phi\right)=s x.
\end{equation}
Write $\delta=\sum_{i\ge 2}\delta_i$ with each $\delta_i$ as in equation $(\ref{deltak})$,  and decompose $\Phi=\sum_{i=2}^{n-1}\Phi_i$ with $\Phi_i\in T^i(sH)$. By a word length argument,
$$
 \delta_k(sx_1\otimes\dots\otimes sx_n)+\sum_{i=2}^{n-1}\delta_i(\Phi_i)=sx.
 $$
 Note also that $\delta_p=g_p$ for elements of word length $p$, with $g_p$ as in equation (\ref{hk}). Therefore, $$g_n\left(sx_1\otimes\dots\otimes sx_n\right)+\sum_{i=2}^{n-1}g_i(\Phi_i)=sx.$$
To finish, apply the identities (\ref{relacion}) and write each $g_i$ in terms of the corresponding $m_i$ for all  $i=1,\dots,n$.
In particular, the sign $\varepsilon$ appears when writing
$$
m_n(x_1,\dots, x_n)=s^{-1}\circ g_n\circ s^{\otimes n}(x_1,\dots, x_n)=\varepsilon\,s^{-1} g_n(sx_1\otimes\dots\otimes sx_n).
$$
\end{proof}

\begin{corollary} \label{CorAGeneral} Let $A$ be a DGA such that for some (and hence for any) $A_\infty$ structure on $H$,  $m_k=0$ for $1\le k\le n-1$. Then,   for any cohomology classes $x_1,{\dots},x_k\in H$,  the Massey product set $\left \langle x_1,\dots,x_n\right\rangle=\left\{x\right\}$ consists of a single class which is recovered by the $n$th multiplication, that is, $\varepsilon\, m_n\left(x_1,{\dots},x_n\right)=x$, with $\varepsilon$ as in Theorem \ref{General}.
\end{corollary}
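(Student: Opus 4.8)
The plan is to deduce everything from part (ii) of Theorem \ref{General}, whose correction term $\Gamma\in\sum_{j=1}^{n-1}\Img(m_j)$ is forced to vanish identically under the present hypothesis. I would fix an $A_\infty$ structure on $H$ with $m_k=0$ for $1\le k\le n-1$; in particular it is minimal. Granting that $\langle x_1,\dots,x_n\rangle$ is nonempty, Theorem \ref{General}(ii) reads $\varepsilon\,m_n(x_1,\dots,x_n)=x+\Gamma$ with $\Gamma=0$ for every $x$ in the set, from which all the assertions drop out. Thus the only point genuinely requiring an argument is the nonemptiness (equivalently, the definedness) of the $n$th Massey product.

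To secure it I would run an induction on the arity $\ell$, for $2\le\ell\le n-1$, showing that every Massey product $\langle x_i,\dots,x_{i+\ell-1}\rangle$ built from a block of consecutive classes is defined and trivial. The base case $\ell=2$ is just the cup product $x_ix_{i+1}=m_2(x_i,x_{i+1})=0$, which vanishes because $m_2=0$. For the inductive step, the triviality of all strictly shorter consecutive blocks — the induction hypothesis — is exactly the condition under which $\langle x_i,\dots,x_{i+\ell-1}\rangle$ is defined; applying Theorem \ref{General}(ii) to this product, any element $x'$ satisfies $\varepsilon'\,m_\ell(x_i,\dots,x_{i+\ell-1})=x'+\Gamma'$ with $\Gamma'\in\sum_{j=1}^{\ell-1}\Img(m_j)=0$, and since $\ell\le n-1$ also $m_\ell=0$, forcing $x'=0$. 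Carrying the induction up to $\ell=n-1$ yields the triviality of every $\langle x_i,\dots,x_j\rangle$ with $j-i\le n-2$, which is precisely the hypothesis making $\langle x_1,\dots,x_n\rangle$ nonempty.

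With nonemptiness in hand I would invoke Theorem \ref{General}(ii) one final time, now at arity $n$: for every $x\in\langle x_1,\dots,x_n\rangle$ one gets $\varepsilon\,m_n(x_1,\dots,x_n)=x+\Gamma$ with $\Gamma\in\sum_{j=1}^{n-1}\Img(m_j)=0$, hence $\varepsilon\,m_n(x_1,\dots,x_n)=x$. The left-hand side is a single class, fixed by the chosen structure and independent of the choice of $x$; consequently every element of $\langle x_1,\dots,x_n\rangle$ coincides with it, so the set collapses to the singleton $\{x\}$ with $x=\varepsilon\,m_n(x_1,\dots,x_n)$. This simultaneously establishes the recovery formula and the claim that the Massey product consists of a single class.

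The one delicate point is the bookkeeping of the nonemptiness induction: at each arity one must check that the Massey product is defined \emph{before} quoting Theorem \ref{General}(ii), and this is exactly what the induction hypothesis on shorter blocks supplies. Once this is arranged, the whole proof is a repeated exploitation of the vanishing of $\Gamma$. Finally, the parenthetical ``(and hence for any)'' is explained by the same mechanism applied to arbitrary tuples: the vanishing of $m_1,\dots,m_{n-1}$ in one structure makes every Massey product of arity $\le n-1$ trivial, a property intrinsic to $A$, and feeding this back through Theorem \ref{General}(ii) forces the lower multiplications to vanish in any other $A_\infty$ structure on $H$ as well, all such structures being minimal.
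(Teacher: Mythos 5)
Your proof is correct and follows essentially the same route as the paper: an induction on the length of consecutive sub-blocks, using the vanishing of $m_1,\dots,m_{n-1}$ together with Theorem \ref{General}(ii) to show each shorter product is exactly $\{0\}$ (hence $\langle x_1,\dots,x_n\rangle$ is nonempty), followed by one final application of Theorem \ref{General}(ii) with $\Gamma=0$ to collapse the set to the singleton $\{\varepsilon\, m_n(x_1,\dots,x_n)\}$. Your closing argument for the parenthetical ``(and hence for any)'' is also sound and corresponds to what the paper records separately in Remark \ref{otro}.
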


\begin{proof} By induction on $s=j-i$, we have that
\begin{equation*}
\left\{0\right\} = \left\langle x_i,{\dots},x_j \right\rangle \neq \emptyset \quad \textrm{for all} \quad 1\leq i  < \cdots < j \leq n, \quad \textrm{and } j-i \leq n-1.
\end{equation*} Now given $x\in\left \langle x_1,{\dots},x_n\right\rangle,$ the result follows from a direct application of Theorem \ref{General}.
\end{proof}

\begin{remark}\label{otro}\rm Note that the least  $k$ for which the $k$th multiplication $m_k$ is non trivial does not depend on the given $A_\infty$ structure. Hence the result above holds for all of them. The same applies for Theorem \ref{Teoremilla}.
\end{remark}

 Even when the $A_\infty$ structure on $H$ is induced by a contraction, Massey products are not always recovered by higher multiplications unless restrictive hypothesis are imposed.

\begin{definition}\label{RetractoAdaptado2} \rm Let $x\in \left\langle x_1,{\dots},x_n\right\rangle$. A contraction $(A,H,i,q,K)$ is \emph{adapted to $x$} if there exists a defining system $\{a_{ij}\}$ for $x$ such that $i(x_j)=a_{j-1,j}$ for every $j$ and $\{a_{ij}\}_{j-i\geq 2} \subseteq KdA$.\end{definition}

Recall from Remark \ref{ExisteRetracto} that $KdA=B$ in the decomposition $A=B\oplus dB \oplus C$ associated to the contraction

\begin{theorem}\label{AdaptadoSaleMassey}  Let $x\in \left\langle x_1,{\dots},x_n \right\rangle$. Then, for any contraction adapted to $x$, $$\varepsilon\, m_n(x_1,{\dots},x_n)= x,$$ where $\varepsilon = (-1)^{1 + |x_{n-1}| + |x_{n-3}|+{\cdots}}$ .
\end{theorem}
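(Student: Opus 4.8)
The plan is to compute the transferred products $m_k=q\circ\lambda_k$ directly from the inductive formula (\ref{FormulaLambda}), using the adaptedness hypothesis to identify each intermediate map $K\lambda_k$ with a filler of the chosen defining system. Fix a defining system $\{a_{ij}\}$ witnessing that the contraction is adapted to $x$, so that $i(x_j)=a_{j-1,j}$ for all $j$ and $a_{ij}\in KdA$ whenever $j-i\ge 2$. Recall from Remark \ref{ExisteRetracto} that $KdA=B$ in the decomposition $A=B\oplus dB\oplus C$, and that $K$ restricts to the inverse of $d\colon B\stackrel{\cong}{\to} dB$; in particular $K(db)=b$ for every $b\in B$, which is the fact I will use to turn coboundaries back into fillers.

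The core of the argument is the claim, proved by induction on $k$, that for every $1\le k\le n-1$ and every $0\le i\le n-k$,
$$K\lambda_k(x_{i+1},\dots,x_{i+k})=\varepsilon_k\, a_{i,i+k},\qquad \varepsilon_k=(-1)^{1+|x_{i+k-1}|+|x_{i+k-3}|+\cdots}.$$
The base case $k=1$ is immediate from $K\lambda_1=-i$ and $i(x_{i+1})=a_{i,i+1}$, giving $\varepsilon_1=-1$. For the inductive step I substitute the hypothesis into (\ref{FormulaLambda}), letting $K\lambda_s$ act on the first $s$ arguments and $K\lambda_{k-s}$ on the last $k-s$; commuting the degree $1-(k-s)$ map $K\lambda_{k-s}$ past the first $s$ inputs contributes a Koszul sign $(-1)^{(1-k+s)(|x_{i+1}|+\cdots+|x_{i+s}|)}$. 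Recording that $|a_{i,i+s}|=|x_{i+1}|+\cdots+|x_{i+s}|+1-s$, the accumulated sign is arranged to reproduce the coefficient $(-1)^{|a_{i,i+s}|+1}$ in $\overline{a}_{i,i+s}$, so that
$$\lambda_k(x_{i+1},\dots,x_{i+k})=\varepsilon_k\!\!\sum_{i<l<i+k}\!\!\overline{a}_{i,l}\,a_{l,i+k}=\varepsilon_k\, d(a_{i,i+k}),$$
the last equality being the defining-system relation. Applying $K$ and using $a_{i,i+k}\in B$ gives $K\lambda_k(x_{i+1},\dots,x_{i+k})=\varepsilon_k\,Kd(a_{i,i+k})=\varepsilon_k\,a_{i,i+k}$, closing the induction.

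With the claim established, the top product is computed by the identical manipulation at arity $n$, the only differences being that one applies $q$ rather than $K$, and that $a_{0,n}$ is no longer part of the defining system. The same sign analysis yields
$$\lambda_n(x_1,\dots,x_n)=\varepsilon\sum_{0<l<n}\overline{a}_{0,l}\,a_{l,n},\qquad \varepsilon=(-1)^{1+|x_{n-1}|+|x_{n-3}|+\cdots},$$
and the right-hand sum is precisely a representative of the chosen element $x\in\langle x_1,\dots,x_n\rangle$. Since $q$ sends a cocycle to its cohomology class, $m_n(x_1,\dots,x_n)=q\lambda_n(x_1,\dots,x_n)=\varepsilon\,x$, and therefore $\varepsilon\,m_n(x_1,\dots,x_n)=x$ because $\varepsilon^2=1$.

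The genuinely delicate part is the sign bookkeeping in the inductive step: I must verify that the product of the explicit factor $(-1)^{s+1}$ in (\ref{FormulaLambda}), the Koszul sign, and the two inductive signs $\varepsilon_s$ and $\varepsilon_{k-s}$ collapses to a single global sign $\varepsilon_k$ \emph{independent of the splitting index} $s$, and moreover that this global sign is exactly the one converting $\sum a_{i,l}a_{l,i+k}$ into $\sum\overline{a}_{i,l}a_{l,i+k}$. I expect this to demand a short separate parity computation relating $\varepsilon_s$, $\varepsilon_{k-s}$ and $\varepsilon_k$ through the degree formula for $|a_{i,i+s}|$; once that identity is in place everything else is a direct unwinding of the transfer formula together with the adaptedness hypothesis $a_{ij}\in KdA$.
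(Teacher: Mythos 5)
Your proposal is correct and follows essentially the same route as the paper's proof: an induction on arity showing that $K\lambda_k$ applied to consecutive classes returns, up to the sign $\varepsilon_k$, the corresponding defining-system element (using adaptedness so that $Kd=\id$ on $B=KdA$), followed by the same computation at arity $n$ with $q$ in place of $K$. If anything, your uniform claim over all consecutive segments $(x_{i+1},\dots,x_{i+k})$ is a slightly cleaner formulation than the paper's separate treatment of initial and final segments, since the inductive step genuinely requires the middle-segment cases.
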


\begin{proof}Let $x\in \langle x_1,{\dots},x_n\rangle$ and let $\{a_{ij}\}$ be a defining system for which the contraction $(A,H,i,q,K)$ is adapted to $x$. Consider the map
$
\lambda_n\colon H^{\otimes n}\longrightarrow A,\quad n\ge 2,
$
in formula (\ref{FormulaLambda}) for which $
m_n=q\circ\lambda_n$.
First, we prove by induction on $s$, for $2\leq s \leq n-1$, that,
\begin{equation}\label{Paso1}
K\lambda_s(x_1,{\dots},x_s)=(-1)^{b_s}a_{0s},
\end{equation}
where $b_s = |x_{s-1}| + |x_{s-3}| + \cdots + 1$.
For $s=2$, it is straightforward. Assume that equation (\ref{Paso1}) holds for every $p\leq s-1,$ and prove it for $p=s$:
\begin{align*}
&K\lambda_s(x_1,{\dots},x_s)=K\lambda_2\left( \sum_{i=1}^{s-1} (-1)^{i+1} K\lambda_i\otimes K\lambda_{s-i} \right) (x_1,{\dots},x_s)\\
&=K\lambda_2\left( \sum_{i=1}^{s-1} (-1)^{i+1+(|x_1|+\cdots+|x_i|)(i-s+1)} K\lambda_i (x_1,{\dots},x_i)\otimes K\lambda_{s-i} (x_{i+1},\dots,x_s)\right)\\
\intertext{apply the induction hypothesis and note that, recursively, $$1+|a_{0k}|=|x_1|+\cdots+|x_k| - (k-2) \qquad \textrm{for every $k=1,...,i.$}$$  Then,}
&=K\lambda_2 \left( \sum_{i=1}^{s-1} (-1)^{i+1+(|x_1|+\cdots+|x_i|)(i-s+1)+b_i+b^{s-i}} a_{0i}\otimes a_{is}\right)\\
&=K\lambda_2 \left( \sum_{i=1}^{s-1} (-1)^{i+1+(|x_1|+\cdots+|x_i|)(i-s+1)+b_i+b^{s-i}+1+|a_{0k}|} \bar a_{0i}\otimes a_{is}\right) \\
&=  (-1)^{b_s}K\left( \sum_{0<i<s} \bar a_{0i}a_{is}\right) = (-1)^{b_s}Kd(a_{0s})=(-1)^{b_s}a_{0s}.
\end{align*}
An analogous argument proves that
\begin{equation}\label{Paso2}
K\lambda_{n-s}(x_{s+1},{\dots},x_n)=(-1)^{b^{n-s}}a_{sn},
\end{equation}
where $b^{n-s} = |x_{n-1}|+|x_{n-3}|+\cdots + 1$.  Using equations (\ref{Paso1}) and (\ref{Paso2}) we have,

\begin{align*}
&\lambda_n(x_1,{\dots},x_n)=\lambda_2\left( \sum_{s=1}^{n-1}(-1)^{s+1}K\lambda_s\otimes K\lambda_{n-s} \right) (x_1,{\dots},x_n)  \\
&= \lambda_2 \left( \sum_{s=1}^{n-1} (-1)^{s+1+\left(\sum_{i=1}^s |x_i|\right)(s-n+1)} K\lambda_s(x_1,{\dots},x_s)\otimes k\lambda_{n-s}(x_{s+1},{\dots},x_n)\right)\\
&= \sum_{s=1}^{n-1} (-1)^b \bar a_{0s}a_{sn},
\end{align*}where $b=|x_{n-1}|+|x_{n-3}|+\cdots+ 1$. Hence, $m_n(x_1,\dots,x_n)=\varepsilon\, x$ \end{proof}

The next example shows that if a contraction is not adapted to a given Massey product, then the associated higher multiplication might not recover it.

\begin{example} \label{ContraejemploMassey} \rm Let $(\Lambda V,d)$ be the commutative DGA over $\Q$, where  \begin{equation*}
V=\Span\{\underbrace{a_{01},a_{12},a_{23},a_{34}}_{\textrm{degree 3}},\quad  \underbrace{a_{02},a_{13},a_{24},z_1,z_2}_{\textrm{degree } 5}, \quad \underbrace{a_{03},a_{14}}_{\textrm{degree } 7}\},
\end{equation*}$a_{i-1,i}$ and $z_1,z_2$ are cocycles, and for the rest of the elements $\displaystyle d a_{ij}=\sum_{i<k<j} a_{ik}a_{kj}.$ \\
It is straightforward to check that, fixing the cohomology classes $x_i=[a_{i-1,i}]$ for $i=1,2,3,4$, there is a unique defining system $\{a_{ij}\}$ (given by the obvious choices) which gives rise to a unique non trivial Massey product $x=[a_{01}a_{14}+a_{02}a_{24}+a_{03}a_{34}]$. That is, $\langle x_1,x_2,x_3,x_4\rangle=\{x\}\subseteq H^{10}$ and $x\neq 0$.
It is also easy to see that the cohomology in degree $10$ admits the basis $\left\{ x,[z_1z_2]\right\}$.

Next, fix the decomposition of $(\Lambda V,d)$ as $B\oplus dB\oplus C$  like in Remark \ref{ExisteRetracto} given in the table below. On it, elements appearing in $(dB)^s$ come from differentiating the elements of $B^{s-1}$ in the order written, and a dot $\cdot$ indicates that the corresponding subspace is the trivial one. The decomposition above degree $10$ is irrelevant for our purposes.
\begin{center}
\begin{tabular}{cccc}
degree & $B$ & $dB$ & $C $\\ \toprule
3& $\cdot$ & $\cdot$ & $a_{01},a_{12},a_{23},a_{34}$\\ \midrule
4& $\cdot$ & $\cdot$& $\cdot$\\ \midrule
5& $\begin{array}{c}a_{02}+z_1, a_{13},\\ a_{24}+z_2\end{array}$& $\cdot$ & $z_1,z_2$\\ \midrule
6&$\cdot$ & $\begin{array}{c}a_{01}a_{12}, a_{12}a_{23},\\ a_{23}a_{34}\end{array}$& $a_{01}a_{23}, a_{01}a_{34},a_{12}a_{34}$\\ \midrule
7& $a_{03}, a_{14}$& $\cdot$ &  $\cdot$ \\ \midrule
8&$\begin{array}{c}a_{01}a_{13},a_{01}a_{24},\\a_{02}a_{34},a_{12}a_{24}\end{array}$  & $\begin{array}{c}a_{01}a_{13}+a_{02}a_{23},\\ a_{12}a_{24}+a_{13}a_{34}\end{array}$ &  $\begin{array}{c} a_{01}a_{23},a_{02}a_{12},a_{12}a_{13}, \\ a_{13}a_{23},a_{23}a_{24},a_{23}a_{34}, \\ a_{01}z_1,a_{12}z_1,a_{23}z_1,a_{34}z_1, \\ a_{01}z_2,a_{12}z_2,a_{23}z_2,a_{34}z_2\end{array}$\\ \midrule
9& $\cdot$ & $\begin{array}{c}a_{01}a_{13}a_{23},a_{01}a_{23}a_{34},\\ a_{01}a_{12}a_{34},a_{12}a_{23}a_{34}\end{array}$ &  $\cdot$ \\ \midrule
10& & $\cdot$ & $\begin{array}{c}a_{01}a_{14}+a_{02}a_{24}+a_{03}a_{34}\\ z_1z_2\end{array}$\\ \midrule
\end{tabular}\\
\end{center}

Then, a straightforward computation for the contraction associated to this decomposition and the induced $A_\infty$ structure on $H^*(\Lambda V,d)$ gives the cohomology class
 $$m_4(x_1,x_2,x_3,x_4)=-x-[z_1z_2],$$ which is \emph{not} a Massey product by the discussion above.

\end{example}

\begin{remark}\rm By basic facts on rational homotopy theory \cite{felix2012rational}, the commutative DGA $(\Lambda V,d)$ in the example above is the minimal model of  a simply connected elliptic complex $X$ which is of the form
$$
X=S^5\times S^5\times Y$$
where $Y$ lies as the total space of a fibration of the sort
$$
(S^5)^{\times 3}\times (S^7)^{\times2}\longrightarrow Y\longrightarrow (S^3)^{\times 4}.
$$
Hence, Example \ref{ContraejemploMassey}  shows that in $H^*(X;\Q)$ the set $\langle x_1,x_2,x_3,x_4\rangle$ reduces to the single element $x$ which is not recovered by the $A_\infty$ structure induced by the given decomposition. \end{remark}

The following is an example of a DGA with infinitely many triple Massey products that are never recovered by any $A_\infty$ structure on its cohomology induced by a contraction. In particular, there are infinitely many distinct(although isomorphic)  $A_\infty$ structures which do not arise by contractions.

\begin{example}\label{OtroEjem}\rm Let $(\Lambda V,d)$ be the commutative DGA over $\Q$ where $$V=\Span\{\underbrace{a_{01},a_{12},a_{23}}_{\textrm{degree 3}}, \underbrace{a_{02},a_{13}}_{\textrm{degree } 5}\}$$ and $$d{a_{01}}=d{ a_{12}}=d{a_{23}}=0, \quad  d{a_{02}}=a_{01}a_{12},\quad  d{a_{13}}=a_{12}a_{23}.$$ Let
$$
A=(\Lambda V,d)/J
$$
where  $J$ is the differential ideal generated by $\{a_{01}a_{12},a_{12}a_{23}\}$. We denote the elements of the quotient algebra as in the original without confusion.

Fixed $x_1=[a_{01}], x_2=[a_{12}]$ and $x_3=[a_{23}]$, the possible defining systems $\{b_{ij}\}$ for the triple Massey product $\langle x_1,x_2,x_3\rangle$ are of the following form, where $\alpha_k,\beta_k\in \Q$:

$$b_{01}=a_{01}, \quad b_{12}=a_{12},  \quad b_{23}=a_{23},$$
$$b_{02}=\alpha_1 a_{02}+\alpha_2a_{13}\quad \textrm{ and }   \quad  b_{13}=\beta_1 a_{02}+\beta_2a_{13}.$$

This implies that the triple Massey product set is $$\langle x_1,x_2,x_3\rangle=\big\{\alpha_1[a_{01}a_{02}]+\alpha_2[a_{01}a_{13}]+\beta_1[a_{02}a_{23}]+\beta_2[a_{13}a_{23}] \mid \alpha_k,\beta_k \in \Q\big\}.$$ The zero class belongs to the set, but there are infinitely many other non trivial Massey product elements. It is straightforward  to check that for any contraction, one has that $m_3(x_1,x_2,x_3)=0$. Therefore, one never recovers a non trivial Massey product element. \hfill$\square$ \\
\end{example}

\section{Detecting Massey products}\label{DiscusionLu}

In Theorem 3.1 of the interesting paper \cite{lu2009infinity}, the authors prove that higher multiplications of $A_\infty$ structures arising from contractions always detect Massey products: for any cohomology classes $x_1,\dots, x_n$ in the cohomology $H$ of a given DGA $A$ such that $\langle x_1,\dots,x_n\rangle$ is non empty, and for any $A_\infty$ structure on $H$ induced by a contraction,
$$
\varepsilon \, m_n(x_1,\dots,x_n)\in \langle x_1,\dots,x_n\rangle,
$$
where $\varepsilon$ is as in Theorem \ref{AdaptadoSaleMassey}.

Unfortunately, as stated, this result is only valid for $n=3$ which is the first inductive step in its proof, and also, our Corollary \ref{Caso31} below. For $n\ge 4$, Example \ref{ContraejemploMassey} is a clear counterexample. The small gap in the proof occurs when assuming that  the elements
$$\left\{ a_{ij}= K\lambda_{j-i+1}(x_i,{\dots},x_j) \mid 2 < j-i < n-1\right\}$$
together with suitable representatives $a_{i-1,i}$ of the classes $x_i$ form a defining system. This is not the case, for instance, in the contraction chosen in Example \ref{ContraejemploMassey}.
If one imposes  this extra assumption, then the inductive proof in \cite[Thm. 3.1]{lu2009infinity} works and it shows the following:

\begin{theorem}\label{Lemita} Let $A$ be a DGA and assume $\langle x_1,{\dots},x_n \rangle \neq \emptyset$, $n\ge 3$. Then, for any contraction of $A$ such that the elements $$\left\{ a_{ij}= K\lambda_{j-i+1}(x_i,{\dots},x_j) \mid 2 < j-i < n-1\right\}$$  assemble into a defining system,  $$\varepsilon\,  m_n(x_1,{\dots},x_n)\in \langle x_1,{\dots},x_n \rangle$$ with $\varepsilon$ as in Theorem \ref{AdaptadoSaleMassey}.\hfill$\square$
\end{theorem}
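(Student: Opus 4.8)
The plan is to deduce the statement directly from Theorem \ref{AdaptadoSaleMassey}, by showing that the very hypothesis on the contraction is a reformulation of being adapted. First I would fix representatives by declaring $a_{j-1,j}=i(x_j)$ for $1\le j\le n$; these are cocycles because $\Img i\subseteq\Ker d$, and they are the representatives implicitly built into the recursion (\ref{FormulaLambda}) through the convention $K\lambda_1=-i$. By hypothesis the transferred partial products $a_{ij}=K\lambda_{j-i+1}(x_i,\dots,x_j)$ with $j-i\ge 2$, together with these representatives, satisfy the defining-system relations $d(a_{ij})=\sum_{i<k<j}\overline a_{ik}a_{kj}$; call the resulting defining system $\{a_{ij}\}$ and let $x=\bigl[\sum_{0<k<n}\overline a_{0k}a_{kn}\bigr]\in\langle x_1,\dots,x_n\rangle$ be the Massey product element it determines.

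Next I would check that $\{a_{ij}\}$ meets both requirements of Definition \ref{RetractoAdaptado2}. The equality $a_{j-1,j}=i(x_j)$ holds by construction, and by Remark \ref{ExisteRetracto} one has $\Img K=KdA$, so every higher entry $a_{ij}$ (with $j-i\ge2$), lying in $\Img K$, automatically belongs to $KdA$. Thus the contraction is adapted to $x$, and Theorem \ref{AdaptadoSaleMassey} applies verbatim to give $\varepsilon\,m_n(x_1,\dots,x_n)=x\in\langle x_1,\dots,x_n\rangle$ with the stated sign. On this route the only thing to verify is the bookkeeping that the hypothesis genuinely assembles a full defining system for the choice $a_{j-1,j}=i(x_j)$.

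The substantive content—and the step I would expect to be the real obstacle—is best seen through a self-contained induction mirroring the proof of Theorem \ref{AdaptadoSaleMassey} and the scheme of \cite{lu2009infinity}. Here I would show by induction on the arity $s$ that $K\lambda_s(x_1,\dots,x_s)=\pm a_{0s}$, using the recursion (\ref{FormulaLambda}) and the contraction identity $dK+Kd=\id-iq$, which yields $d\bigl(K\lambda_s(x_1,\dots,x_s)\bigr)=\lambda_s(x_1,\dots,x_s)-i\,m_s(x_1,\dots,x_s)-Kd\lambda_s(x_1,\dots,x_s)$, and then substitute into (\ref{FormulaLambda}) for $\lambda_n$ and apply $q$. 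The crux is the correction term $i\,m_s$: the product part $\lambda_s$ reproduces $\sum_{0<k<s}\overline a_{0k}a_{ks}$ and is a cocycle, so $Kd\lambda_s=0$, but $K\lambda_s$ is an honest defining-system entry only when $m_s$ vanishes on the relevant substrings. This vanishing fails for a general contraction—exactly the gap in \cite[Thm.~3.1]{lu2009infinity} detected by Example \ref{ContraejemploMassey}—and is precisely what the assumption that the $K\lambda$'s assemble into a defining system supplies. Controlling these correction terms up to the top intermediate arity, so that $\lambda_n(x_1,\dots,x_n)$ becomes a cocycle representing a Massey product element, is the heart of the argument.
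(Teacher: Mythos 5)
Your argument is correct, and it is packaged differently from the paper's. The paper gives no self-contained proof of Theorem \ref{Lemita}: it simply remarks that, under the stated hypothesis, the inductive argument of \cite[Thm.~3.1]{lu2009infinity} goes through --- and that induction is, in substance, the same computation $K\lambda_s(x_1,\dots,x_s)=(-1)^{b_s}a_{0s}$ that constitutes the proof of Theorem \ref{AdaptadoSaleMassey}. You instead use Theorem \ref{AdaptadoSaleMassey} as a black box, observing that the hypothesis, read with the representatives $a_{j-1,j}=i(x_j)$ forced by the convention $K\lambda_1=-i$ in (\ref{FormulaLambda}), is precisely the assertion that the contraction is adapted to the class $x$ of the assembled defining system. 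The one genuinely new ingredient you need is $\Img K=KdA$, and it does hold: composing the homotopy identity $dK+Kd=\pm(\id-iq)$ with $K$ and using $K^2=Ki=0$ gives $KdK=\pm K$, hence $Ka=\pm Kd(Ka)\in KdA$ for every $a$; alternatively it is immediate from the decomposition of Remark \ref{ExisteRetracto}, under which $K$ kills $B\oplus C$ and maps $dB$ isomorphically onto $B=KdA$. What your route buys is economy: no induction is repeated, and Theorem \ref{Lemita} appears as a formal corollary of Theorem \ref{AdaptadoSaleMassey}; what the paper's route emphasizes is that the corrected argument of \cite{lu2009infinity} needs only this extra hypothesis. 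One bookkeeping caveat, traceable to the paper's own statement rather than to your proof: since $dK\lambda_s(x_1,\dots,x_s)=\lambda_s(x_1,\dots,x_s)-i\,m_s(x_1,\dots,x_s)$ (up to the homotopy sign convention), the elements $K\lambda$ satisfy the defining-system equations only after adjustment by the signs $(-1)^{b_s}$ of Theorem \ref{AdaptadoSaleMassey}, so ``assemble into a defining system'' must be read up to these signs; this is harmless for your reduction, because membership in the subspace $KdA$ is insensitive to signs. Finally, your closing diagnosis --- that the obstruction to the $K\lambda$'s forming a defining system is exactly the vanishing of the intermediate terms $i\,m_s$ on substrings, which fails in Example \ref{ContraejemploMassey} --- agrees precisely with the paper's account of the gap in \cite{lu2009infinity}.
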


To ensure that any $A_\infty$ structure on $H$ arising from a contraction detects Massey product, extra assumptions are needed:

\begin{theorem} \label{Teoremilla}  Let  $\left \langle x_1,{\dots},x_n\right\rangle\neq \emptyset$ with $n\ge 3$. If for some (and hence for any) contraction of $A$ onto $H$, the induced higher multiplications $m_k=0$ for $k\le n-2$,  then $$\varepsilon\, m_n\left(x_1,{\dots},x_n\right)\in \left \langle x_1,{\dots},x_n\right\rangle,$$
 with $\varepsilon$ as in Theorem \ref{General}.
\end{theorem}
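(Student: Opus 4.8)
The plan is to reduce the statement to Theorem \ref{Lemita}. That result already delivers the conclusion once one knows that, for the contraction at hand, the elements produced from the transfer morphism (as in the proof of Theorem \ref{AdaptadoSaleMassey}, where $a_{0s}=\pm K\lambda_s(x_1,\dots,x_s)$, and in general $a_{ij}=\pm K\lambda_{j-i}(x_{i+1},\dots,x_j)$) genuinely assemble into a defining system for $\langle x_1,\dots,x_n\rangle$. So the entire proof amounts to verifying the defining-system relations $d(a_{ij})=\sum_{i<k<j}\bar a_{ik}a_{kj}$ under the hypothesis $m_k=0$ for $k\le n-2$. First I would record that, by Remark \ref{otro}, the vanishing $m_k=0$ for $k\le n-2$ is independent of the chosen $A_\infty$ structure, in particular of the chosen contraction; this is exactly what licenses the phrase ``for some (and hence for any)''.

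Next I would extract the only piece of information that is not purely formal. Since $\langle x_1,\dots,x_n\rangle\neq\emptyset$, every sub-product $\langle x_i,\dots,x_j\rangle$ with $j-i\le n-2$ is trivial. Applying Corollary \ref{CorAGeneral} to the $(n-1)$-fold consecutive products (legitimate precisely because $m_k=0$ for $k\le n-2$), each such $\langle x_i,\dots,x_{i+n-2}\rangle$ is a single class recovered by $m_{n-1}$; being trivial it must equal $\{0\}$, so $m_{n-1}(x_i,\dots,x_{i+n-2})=0$ for every consecutive tuple. This is the single input that makes the top-level relation of the defining system close up.

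The core step is then the verification of the defining-system relations, and this is where I expect the real work and the main obstacle to lie. I would feed the $A_\infty$ quasi-isomorphism $I\colon H\to A$ extending $i$ from Theorem \ref{HTT}, whose components are $I_k=K\lambda_k$, into the morphism identity $(\ref{highermap})$. Because $A$ is a DGA, the right-hand side collapses to $d\,I_m+\sum_{p+q=m}\pm\,I_p\cdot I_q$. On the left-hand side every inner operation $m_s$ with $2\le s\le n-2$ vanishes by hypothesis, and $m_1=0$ by minimality; for $m\le n-2$ this kills the whole left-hand side, while for $m=n-1$ the only surviving term is $i\,m_{n-1}(x_{i+1},\dots,x_j)$, which vanishes by the previous paragraph. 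Hence $d\,I_m(x_{i+1},\dots,x_j)=-\sum_{p+q=m}\pm\,I_p\cdot I_q$ for every $m=j-i$ with $2\le m\le n-1$. Rewriting $I_k=K\lambda_k$ and inserting the normalization $\bar a=(-1)^{|a|+1}a$ exactly as in the sign computation in the proof of Theorem \ref{AdaptadoSaleMassey} turns these into the required relations $d(a_{ij})=\sum_{i<k<j}\bar a_{ik}a_{kj}$.

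With the defining system in hand, Theorem \ref{Lemita} applies verbatim and yields $\varepsilon\,m_n(x_1,\dots,x_n)\in\langle x_1,\dots,x_n\rangle$ with $\varepsilon$ as in the statement. The main obstacle is genuinely the third paragraph: one must be certain that the morphism identity leaves only the quadratic terms $I_p\cdot I_q$ at every arity up to $n-1$, which is exactly where the hypothesis $m_k=0$ $(k\le n-2)$ and the forced vanishing $m_{n-1}(x_i,\dots,x_{i+n-2})=0$ enter, and one must carry the sign bookkeeping carefully enough to match the $\bar a$-convention so that the abstract identity becomes the classical defining-system relation.
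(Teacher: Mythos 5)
Your proposal is correct, and its overall skeleton coincides with the paper's: both first force $m_{n-1}$ to vanish on consecutive subtuples (you do this via Corollary \ref{CorAGeneral}; the paper applies Theorem \ref{General}(ii) directly to $0\in\langle x_i,\dots,x_j\rangle$ and kills $\Gamma$ with the hypothesis $m_k=0$, $k\le n-2$ --- the same computation), then exhibit a defining system built from the transfer data, and finally conclude through the adapted-contraction machinery (you cite Theorem \ref{Lemita}; the paper in effect re-invokes Theorem \ref{AdaptadoSaleMassey}). Where you genuinely diverge is the mechanism for verifying the defining-system relations. The paper constructs the elements by induction on $j-i$: at each stage it identifies $m_p(x_i,\dots,x_j)$ with $q\bigl(\sum_{i<l<j}\bar a_{il}a_{lj}\bigr)$ by rerunning the computation of Theorem \ref{AdaptadoSaleMassey}, uses the vanishing of $m_p$ (the hypothesis for $p\le n-2$, equation (\ref{Ecuat}) for $p=n-1$) to deduce that this cocycle is exact with $q$ of it zero, and then sets $a_{ij}=K\bigl(\sum_{i<l<j}\bar a_{il}a_{lj}\bigr)$, which lies in $B=KdA$ and satisfies $dK\omega=\omega$. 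You instead obtain all the relations in one stroke from the morphism identity (\ref{highermap}) applied to the transfer quasi-isomorphism $I$ of Theorem \ref{HTT}, whose components are $I_1=i$, $I_k=K\lambda_k$: since $A$ is a DGA the right side collapses to $dI_m$ plus quadratic terms $\pm I_pI_q$, and the vanishing hypotheses (plus the forced vanishing of $m_{n-1}$ on consecutive tuples) annihilate the left side. The two routes produce the same elements --- by equations (\ref{Paso1}) and (\ref{Paso2}) the paper's inductively built $a_{ij}$ are exactly $\pm K\lambda_{j-i}(x_{i+1},\dots,x_j)$ --- and both defer the same sign bookkeeping to the $\bar a$-computation in Theorem \ref{AdaptadoSaleMassey}. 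What yours buys is conceptual economy: the defining-system relations are revealed as nothing but the $A_\infty$-morphism identities with the obstruction terms killed, so no fresh induction is needed. What the paper's buys is chain-level self-containment: it makes explicit why each new element can be produced at all (the exactness argument via $q\omega=0$) without appealing to the full strength of the statement that $I$ is an $A_\infty$ morphism.
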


\begin{proof} Recall that $\left \langle x_1,{\dots},x_k\right\rangle\neq \emptyset$ implies that
$
0 \in \left \langle x_i,{\dots},x_j\right\rangle$, for any $3 \leq j-i \leq k-1$.
Therefore, we apply Theorem \ref{General}, taking into account that $m_i=0$ for $i\leq k-2$ to deduce,
\begin{equation}\label{Ecuat}
m_{k-1}\left(x_i,{\dots},x_j\right) = 0 \quad \textrm{ for any } \quad j-i = k-1.
\end{equation} Let $A=B\oplus dB \oplus C$ be the decomposition equivalent to the chosen contraction. By induction on $p$, with $2\leq p \leq k-1$, we will construct a set of elements $\left\{a_{ij}\right\}_{2 \leq j-i \leq p} \subseteq B$ with the property that $d(a_{ij})=\sum_{i<l<j}\bar a_{il}a_{lj}.$

For each $i$, we denote by $x_i'$ a cocycle representing $x_i$. Let $p=2$. As $\left \langle x_1,{\dots},x_k\right\rangle\neq \emptyset$, we can (and do) define $a_{ij}= Kd b_{ij}$, being $b_{ij}$ any election such that $d(b_{ij})=x_i'x_j'$. Then, $a_{ij}\in B$ by construction, and the differential behaves as expected. \\ Assume the assertion true for $p\leq k-2.$ Then, there exists a family of elements of $B$, $\left\{a_{ij}\right\}_{2 \leq j-i \leq k-2}$, such that $d(a_{ij})=\sum_{i<l<j}\bar a_{il}a_{lj}.$ Now, as the contraction is adapted to the defining system we are building, the same argument as in the proof of Theorem \ref{AdaptadoSaleMassey} proves that
$$m_p\left(x_i,{\dots},x_j\right)=q\left(\sum_{i<l<j} \bar a_{il}a_{lj}\right)\quad \textrm{ for any }\quad 3 \leq p=j-i\leq k-2.$$ By equation (\ref{Ecuat}), $$q\left(\sum_{i<l<j}\bar a_{il}a_{lj}\right)=0 \quad \textrm{ for } \quad j-i=k-1.$$ Hence, there exists some $\Psi$ with $d\Psi=\sum_{i<l<j}\bar a_{il}a_{lj}$. Finally, define $$a_{ij}=K\left(\sum_{i<l<j} \bar a_{il}a_{lj}\right) \quad \textrm{ for  }\quad  j-i = k-1,$$ which belongs to $B$ and satisfies our claim, proving the result.
\end{proof}

\begin{corollary} \label{Caso31} Let $A$ be a DGA with cohomology $H$.
\begin{itemize}
\item[(i)] If $\langle x_1,x_2,x_3\rangle$ is non empty, then, for any $A_\infty$ structure on $H$ induced by a contraction,
$$\varepsilon
\,m_3(x_1,x_2,x_3)\in \langle x_1,x_2,x_3\rangle.
$$
\item[(ii)] If the product on $H$ is trivial and $\langle x_1,x_2,x_3,x_4\rangle$ is non empty, then, for any $A_\infty$ structure on $H$ induced by a contraction, $$\varepsilon
\,m_4(x_1,x_2,x_3,x_4)\in \langle x_1,x_2,x_3,x_4\rangle.\eqno{\square}$$
\end{itemize}
\end{corollary}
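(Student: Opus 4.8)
The plan is to deduce both statements directly from Theorem \ref{Teoremilla} by checking that its hypothesis---the vanishing of the lower multiplications $m_k$ for $k\le n-2$---is automatically satisfied in each of the two cases. The one fact I would establish first, and then reuse, is that any $A_\infty$ structure on $H$ induced by a contraction is minimal by the homotopy transfer theorem (Theorem \ref{HTT}), so that $m_1=0$ always. Moreover, the binary operation $m_2$ of such a transferred structure is nothing but the cup product that $H$ inherits from $A$: starting from formula (\ref{FormulaLambda}) with $K\lambda_1=-i$, one computes $\lambda_2=m(i\otimes i)$, whence $m_2(x_1,x_2)=q\bigl(i(x_1)\,i(x_2)\bigr)$ is exactly the cohomology class of the product of cocycle representatives. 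I would record this identification explicitly, since it is what converts the qualitative hypothesis ``the product on $H$ is trivial'' into the numerical condition $m_2=0$ required by Theorem \ref{Teoremilla}.

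For part (i), I would apply Theorem \ref{Teoremilla} with $n=3$. Its hypothesis then demands $m_k=0$ only for $k\le n-2=1$, i.e. merely $m_1=0$, which holds by minimality. Hence, whenever $\langle x_1,x_2,x_3\rangle\ne\emptyset$, the conclusion $\varepsilon\,m_3(x_1,x_2,x_3)\in\langle x_1,x_2,x_3\rangle$ follows immediately, for every $A_\infty$ structure on $H$ coming from a contraction.

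For part (ii), I would apply Theorem \ref{Teoremilla} with $n=4$. Now the hypothesis requires $m_k=0$ for $k\le n-2=2$, that is, both $m_1=0$ and $m_2=0$. The first is again minimality, and the second is precisely the assumed triviality of the product on $H$, via the identification of $m_2$ with the cohomology product noted above. Theorem \ref{Teoremilla} then yields $\varepsilon\,m_4(x_1,x_2,x_3,x_4)\in\langle x_1,x_2,x_3,x_4\rangle$ whenever this Massey product set is non empty.

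I do not anticipate a genuine obstacle, as the corollary is a bookkeeping specialization of Theorem \ref{Teoremilla}; the only step meriting care is the verification that $m_2$ really is the cohomology product, so that the two cases $n=3$ and $n=4$ line up exactly with the vanishing conditions $m_1=0$ and $m_1=m_2=0$. This is a routine consequence of the transfer formulas and carries no difficulty.
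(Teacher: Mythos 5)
Your proposal is correct and is exactly the argument the paper intends: the corollary is stated without proof precisely because it is the specialization of Theorem \ref{Teoremilla} to $n=3$ (where minimality of the transferred structure gives $m_1=0$) and $n=4$ (where, in addition, $m_2(x_1,x_2)=q\bigl(i(x_1)i(x_2)\bigr)$ is the cohomology product, so triviality of the product gives $m_2=0$). Your explicit verification via formula (\ref{FormulaLambda}) that $m_2$ coincides with the cup product is the right bookkeeping step and matches the paper's framework.
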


\bibliographystyle{plain}
\bibliography{MyBib}

\begin{thebibliography}{10}

\bibitem{babenko2000massey}
I.~K. Babenko and I.~A. Taimanov.
\newblock Massey products in symplectic manifolds.
\newblock {\em Sbornik: {M}athematics}, 191(8):1107--1146, 2000.

\bibitem{HigherWhitehead}
F.~Belchí, U.~Buijs, J.~M. Moreno-Fernández, and A~Murillo.
\newblock Higher order {W}hitehead products and {$L_\infty$} structures on the
  homology of a {D}{G}{L}.
\newblock {\em Linear Algebra Appl.}, 520:16 -- 31, 2017.

\bibitem{deligne1975real}
P.~Deligne, P.~Griffiths, J.~Morgan, and D.~Sullivan.
\newblock Real homotopy theory of {K}{\"a}hler manifolds.
\newblock {\em Invent. Math.}, 29(3):245--274, 1975.

\bibitem{felix2012rational}
Y.~F{\'e}lix, S.~Halperin, and J-C. Thomas.
\newblock {\em Rational homotopy theory}, volume 205.
\newblock Springer Science \& Business Media, 2012.

\bibitem{Fukaya}
K.~Fukaya.
\newblock Deformation theory, homological algebra and mirror symmetry.
\newblock {\em Ser. High Energy Phys. Cosmol. Gravit., IOP Bristol}, pages
  121--209, 2003.

\bibitem{gugenheim1986perturbations}
V.~K. A.~M. Gugenheim and J.~Stasheff.
\newblock On perturbations and ${A}_\infty$-structures.
\newblock {\em Bull. Soc. Math. Belg}, 38:237--246, 1986.

\bibitem{huebschmann1991small}
J.~Huebschmann and T.~Kadeishvili.
\newblock Small models for chain algebras.
\newblock {\em Math. Z.}, 207(1):245--280, 1991.

\bibitem{Kadeishvili}
T.~V. Kadeishvili.
\newblock On the homology theory of fibre spaces.
\newblock {\em Russian Math. Surveys}, 35(3):231--238, 1980.

\bibitem{Keller}
B.~Keller.
\newblock Introduction to {A}-infinity algebras and modules.
\newblock {\em Homology Homotopy Appl.}, 2(1):1--35., 2001.

\bibitem{kontsevich2000deformations}
M.~Kontsevich and Y.~Soibelman.
\newblock Deformations of algebras over operads and the {D}eligne conjecture.
\newblock {\em Math. Phys. Stud}, 21:255--307, 2000.

\bibitem{Longoni2005375}
R.~Longoni and P.~Salvatore.
\newblock Configuration spaces are not homotopy invariant.
\newblock {\em Topology}, 44(2):375 -- 380, 2005.

\bibitem{lu2009infinity}
D-M Lu, J.~H Palmieri, Q-S Wu, and J.~Zhang.
\newblock A-infinity structure on {E}xt-algebras.
\newblock {\em J. Pure Appl. Algebra}, 213(11):2017--2037, 2009.

\bibitem{Massey}
W.~S. Massey.
\newblock Some higher order cohomology operations.
\newblock In {\em Int. Symp. Alg. Top. Mexico}, pages 145--154. Citeseer, 1958.

\bibitem{massey1998higher}
W.~S. Massey.
\newblock Higher order linking numbers.
\newblock {\em J. Knot Theory Ramifications}, 7(03):393--414, 1998.

\bibitem{MayMatric}
J.~P. May.
\newblock Matric {M}assey products.
\newblock {\em J. Algebra}, 12(4):533--568, 1969.

\bibitem{Merkulov}
S.~A. Merkulov.
\newblock Strong homotopy algebras of a {K}\"ahler manifold.
\newblock {\em Internat. Math. Res. Notices}, (3):153--164, 1999.

\bibitem{minavc2017triple}
J.~Min{\'a}{\v{c}} and N.~D. T{\^a}n.
\newblock Triple {M}assey products and {G}alois theory.
\newblock {\em J. Eur. Math. Soc.}, 19(1):255--284, 2017.

\bibitem{ruiz2013cohomological}
A.~Ruiz and A.~Viruel.
\newblock Cohomological uniqueness, {M}assey products and the modular
  isomorphism problem for 2-groups of maximal nilpotency class.
\newblock {\em Trans. Amer. Math. Soc.}, 365(7):3729--3751, 2013.

\bibitem{stasheff1}
J.~Stasheff.
\newblock Homotopy associativity of {H}-spaces. {I}.
\newblock {\em Trans. Amer. Math. Soc.}, 108(2):275--292, 1963.

\bibitem{stasheff2}
J.~Stasheff.
\newblock Homotopy associativity of {H}-spaces. {I}{I}.
\newblock {\em Trans. Amer. Math. Soc.}, 108(2):293--312, 1963.

\bibitem{stasheff1970h}
J.~Stasheff.
\newblock {\em H-spaces from a homotopy point of view}, volume 161.
\newblock Springer, 1970.

\bibitem{tanre1984homotopie}
D.~Tanr{\'e}.
\newblock {\em Homotopie rationnelle: modeles de {C}hen, {Q}uillen,
  {S}ullivan.}, volume vol. 1025, Springer.
\newblock Springer, 1984.

\bibitem{taylor2010controlling}
L.~R. Taylor.
\newblock Controlling indeterminacy in {M}assey triple products.
\newblock {\em Geometriae Dedicata}, 148(1):371--389, 2010.

\end{thebibliography}

\noindent\sc{Departamento de \'Algebra, Geometr{\'\i}a y Topolog{\'\i}a,\\
Universidad de M\'alaga, Ap. 59, 29080 M\'alaga, Spain.}\\
\\
\noindent\tt{ubuijs@uma.es, josemoreno@uma.es, aniceto@uma.es}

\end{document}